\newtheorem{prop}{Proposition}[section]
\newtheorem{thm}[prop]{Theorem}
\newtheorem{lem}[prop]{Lemma}
\theoremstyle{definition}
\newtheorem{ex}[prop]{Example}
\newtheorem{rem}[prop]{Remark}
\newtheorem*{ack}{Acknowledgement}
\def\co{\colon\thinspace}
\newcommand{\ualpha}{\underline{\alpha}}
\newcommand{\talpha}{\tilde{\alpha}}
\newcommand{\ubeta}{\underline{\beta}}
\newcommand{\C}{\mathbb{C}}
\newcommand{\CP}{\mathbb{C}\mathrm{P}}
\newcommand{\rme}{\mathrm{e}}
\newcommand{\Hp}{\mathbb H}
\newcommand{\tildeh}{\tilde{h}}
\newcommand{\rmi}{\mathrm{i}}
\newcommand{\rmj}{\mathrm{j}}
\newcommand{\rmk}{\mathrm{k}}
\newcommand{\tlambda}{\tilde{\lambda}}
\newcommand{\uM}{\underline{M}}
\newcommand{\tmu}{\tilde{\mu}}
\newcommand{\mfp}{\mathfrak{p}}
\newcommand{\R}{\mathbb{R}}
\newcommand{\RP}{\mathbb{R}\mathrm{P}}
\newcommand{\wtT}{\widetilde{T}}
\newcommand{\wtV}{\widetilde{V}}
\newcommand{\oz}{\overline{z}}
\newcommand{\Z}{\mathbb{Z}}
\DeclareMathOperator{\Int}{Int}
\DeclareMathOperator{\lcm}{\mathrm{lcm}}
\DeclareMathOperator{\piorb}{\pi_1^{\mathrm{orb}}}
\begin{document}

\author[H.~Geiges]{Hansj\"org Geiges}

\author[C.~Lange]{Christian Lange}
\address{Mathematisches Institut, Universit\"at zu K\"oln,
Weyertal 86--90, 50931 K\"oln, Germany}
\email{geiges@math.uni-koeln.de, clange@math.uni-koeln.de}

\title{Seifert fibrations of lens spaces}

\date{}

\begin{abstract}
We classify the Seifert fibrations of any given lens space $L(p,q)$.
Starting from any pair of coprime non-zero integers $\alpha_1^0,\alpha_2^0$,
we give an algorithmic construction of a Seifert fibration $L(p,q)\rightarrow
S^2(\alpha|\alpha_1^0|,\alpha|\alpha_2^0|)$, where the natural
number $\alpha$ is determined by
the algorithm. This algorithm produces all possible Seifert fibrations,
and the isomorphisms between the resulting Seifert fibrations are
described completely. Also, we show that all Seifert fibrations
are isomorphic to certain standard models.
\end{abstract}

\subjclass[2010]{57M50; 55R65, 57M10, 57M60}

\thanks{The authors are partially supported by the SFB/TRR 191
`Symplectic Structures in Geometry, Algebra and Dynamics'.}

\maketitle


\section{Introduction}
Seifert fibred $3$-manifolds constitute an important family in the
classification of $3$-manifolds~\cite{scot83}. A Seifert fibration, roughly
speaking, is an $S^1$-fibration with a finite number of multiple fibres;
see Section~\ref{section:seifert} for the precise definition.

It is well known that most $3$-manifolds that admit a Seifert fibration
do so in a unique way, see~\cite[p.~97]{orli72}. The only exceptions
(among closed, orientable $3$-manifolds) are
\begin{enumerate}
\item[(i)] lens spaces (including $S^3$ and $S^2\times S^1$),
\item[(ii)] prism manifolds, and
\item[(iii)] a single euclidean $3$-manifold,
\end{enumerate}
see \cite[Section~5.4 and Chapter~6]{orli72} or
\cite[Theorem~5.1]{jane83}. In (ii) and (iii)
there are two distinct Seifert fibrations per manifold; in (i)
there are infinitely many.

It is not difficult to see that a Seifert fibration of a lens space
can have at most two multiple fibres (Lemma~\ref{lem:two}),
and one can easily compute the diffeomorphism type of the lens
space from the Seifert invariants of such a fibration
(Theorem~\ref{thm:jane4.4}). These results are classical.

However, in the course of our respective works \cite{fls16, lang16} and
\cite{gego15} we noticed a lacuna in the literature on
Seifert fibrations regarding the converse question: given a lens space,
how does one determine all its Seifert fibrations, up to
isomorphism?
Also, how do these fibrations relate under coverings? The answers to
these questions, in certain special cases, were given by
\emph{ad hoc} arguments in the cited papers. This information
was used for geometric applications concerning periodic real Hamiltonian
structures on projective $3$-space, $2$-dimensional
Riemannian orbifolds with all geodesics closed (so-called
Besse orbifolds),
and the moduli theory of contact circles on $3$-manifolds.
In \cite{lang16} a shorter proof for a result of Pries~\cite{prie09}
concerning Besse metrics on the projective plane was given
by appealing to properties of Seifert fibrations;
from a topologist's point of view
this alternative proof is simpler and more natural, but this may be a
matter of taste.

The aim of the present paper is to provide a comprehensive
answer to these questions. In Theorem~\ref{thm:algorithm}
we introduce an algorithm that allows one to produce
a Seifert fibration on a given lens space with arbitrary
prescribed coprime parts of the multiplicities of the two
singular fibres. It is shown that any Seifert fibration arises
in this way, except for two exceptional non-orientable Seifert fibrations
of $L(4,1)$ and $L(4,3)$, for which we exhibit models in
Section~\ref{subsection:non-orientable}.
The isomorphisms between the Seifert fibrations
coming from this algorithm are analysed in Theorem~\ref{thm:equivalences}.

Concerning coverings, in Section~\ref{section:model} we show that every
Seifert fibration of a lens space is isomorphic to a standard model, obtained
by taking a quotient of a standard Seifert fibration of the $3$-sphere.
This is hardly surprising, but along the way we describe a useful
geometric construction for computing the Seifert invariants of these
standard models.

For the background on Seifert manifolds
we only quote results from Seifert's original paper \cite{seif33}
(see the appendix of \cite{seth80} for an English translation)
and from the lecture notes \cite{jane83} by Jankins and Neumann.
With two introductory sections on Seifert manifolds and lens spaces,
respectively, this paper is essentially self-contained.
A further useful reference on Seifert manifolds are the
lecture notes by Brin~\cite{brin07}.

\section{Seifert manifolds}
\label{section:seifert}
In this section we recall the definition of Seifert manifolds
and their classification in terms of Seifert invariants, mostly to set
up our notation.
\subsection{Seifert fibrations}
\label{subsection:seifert}
A \emph{Seifert fibration} of a closed, oriented $3$-manifold $M$
is a smooth map $\pi\co M\rightarrow \Sigma$ onto some
(possibly non-orientable) closed surface $\Sigma$
with the property that any point $x\in\Sigma$
has a neighbourhood $D^2\subset\Sigma$ (with $x=0\in D^2$)
such that $\pi^{-1}(D^2)$ is diffeomorphic to $D^2\times S^1$,
and (for a suitable choice of diffeomorphism)
the map $\pi\co D^2\times S^1\rightarrow D^2$ is given by
\[ \bigl(r\rme^{\rmi\varphi},\rme^{\rmi\theta}\bigr)\longmapsto
r\rme^{\rmi(\alpha\varphi+\alpha'\theta)}\]
for some coprime integers $\alpha,\alpha'$ with $\alpha\neq 0$.
All fibres but the central one $\{0\}\times S^1$ are described
by a pair of equations
\[ r=r_0,\;\;\; \alpha\varphi+\alpha'\theta=\theta_0\]
for some constants $r_0\in (0,1]$ and $\theta_0\in\R$,
where $\theta$ ranges from $0$ to $2\pi\alpha$. The natural
number $|\alpha|$ is called the \emph{multiplicity} of the central fibre;
if $|\alpha|>1$, the central fibre is called \emph{singular}.

In the local model $D^2\times S^1\rightarrow D^2$, all fibres except
perhaps the central one are non-singular. Thus, compactness of $M$
implies that there are only finitely many singular fibres.

Two Seifert fibrations $\pi\co M\rightarrow\Sigma$ and $\pi'\co
M'\rightarrow\Sigma'$ are said to be \emph{isomorphic}
if there is an orientation-preserving diffeomorphism $f\co M\rightarrow M'$
sending fibres to fibres. This induces a diffeomorphism $\overline{f}\co
\Sigma\rightarrow\Sigma'$, giving a commutative diagram
\[ \begin{CD}
M          @>f>>               M'\\
@V{\pi}VV                      @VV{\pi'}V\\
\Sigma     @>{\overline{f}}>>  \Sigma'.
\end{CD} \]
\subsection{Seifert invariants}
For the moment, let us assume that $\Sigma$ is oriented.
Any Seifert fibred $3$-manifold $M\rightarrow\Sigma$
can then be constructed as follows. Consider disjoint model
neighbourhoods around $n\geq 1$ fibres, including
all the singular ones, corresponding to disjoint
discs $D^2_1,\ldots,D^2_n\subset\Sigma$. Set
\[ \Sigma_0=\Sigma\setminus\Int\bigl(D^2_1\sqcup\ldots\sqcup D^2_n\bigr).\]
Over this surface with boundary, the Seifert fibration
restricts to a trivial $S^1$-bundle $M_0=\Sigma_0\times S^1\rightarrow S^1$.

Write the boundary $\partial\Sigma_0$ with the opposite of its natural
orientation as
\[ -\partial\Sigma_0=S^1_1\sqcup\ldots\sqcup S^1_n.\]
In $M_0$ we define the (isotopy classes of) oriented curves
\[ q_i=S^1_i\times\{1\},\; i=1,\ldots,n,\;\;\;\text{and}\;\;\;
h=\{\ast\}\times S^1.\]
Let $V_i=D^2\times S^1$, $i=1,\ldots,n$ be $n$ copies of a solid torus
with respective meridian and longitude
\[ \mu_i=\partial D^2\times\{1\},\;\; \lambda_i=\{1\}\times S^1\subset
\partial V_i.\]

Now, given pairs $(\alpha_i,\beta_i)$, $i=1,\ldots, n$, of coprime
integers with $\alpha_i\neq 0$ one obtains a Seifert fibration
with distinguished fibres of multiplicities $\alpha_1,\ldots,\alpha_n$
(including all the singular fibres)
by gluing the $V_i$ to $M_0$ along the boundary via
the identifications
\begin{equation}
\label{eqn:seifert}
\mu_i=\alpha_i q_i+\beta_i h,\;\;\; \lambda_i=\alpha_i'q_i+\beta_i'h,
\end{equation}
where integers $\alpha_i',\beta_i'$ are chosen such that
\[ \begin{vmatrix}
\alpha_i & \alpha_i'\\
\beta_i  & \beta_i'
\end{vmatrix}=1.\]

Notice that the identifications can equivalently be written as
\[ h=-\alpha_i'\mu_i+\alpha_i\lambda_i,\;\;\;
q_i=\beta_i'\mu_i-\beta_i\lambda_i.\]
Therefore, in the homology of $V_i$ one has $h\sim\alpha_i\lambda_i$
and $q_i\sim -\beta_i\lambda_i$.

With $g$ denoting the genus of~$\Sigma$, the resulting Seifert fibred
manifold $M\rightarrow\Sigma$ is written as
\[ M\bigl( g;(\alpha_1,\beta_1),\ldots,(\alpha_n,\beta_n)\bigr);\]
the integers in this description are called the \emph{Seifert invariants}.

\begin{rem}
The ambiguity in the choice of $\alpha_i',\beta_i'$ is explained as follows.
For $k\in\Z$, let $\tau^k\co V_i\rightarrow V_i$ be the $k$-fold
Dehn twist along the meridional disc $D^{2}\times\{1\}$. This
diffeomorphism of $V_i$ sends $\mu_i$ to itself, and $\lambda_i$
to $\lambda_i+k\mu_i$. Precomposing the gluing map $\partial V_i
\rightarrow\partial M_0$ with $\tau^k|_{\partial V_i}$ amounts
to replacing $\alpha_i',\beta_i'$ by $\alpha_i'+k\alpha_i,\beta_i'+k\beta_i$.

Similarly, the projection map
$\pi\co D^2\times S^1\rightarrow D^2$ in Section~\ref{subsection:seifert}
changes to
\[ \bigl(r\rme^{\rmi\varphi},\rme^{\rmi\theta}\bigr)\longmapsto
r\rme^{\rmi(\alpha\varphi+(\alpha'+k\alpha)\theta)}\]
under precomposition with $\tau^k$.
\end{rem}

The Seifert invariants determine the Seifert fibration up to isomorphism.
Moreover, two sets of Seifert invariants determine isomorphic Seifert
fibrations if and only if one can be changed into the other using the
following operations, see \cite[Theorem~1.5]{jane83}:
\begin{enumerate}
\item[(S0)] Permute the $n$ pairs $(\alpha_i,\beta_i)$.
\item[(S1)] Add or delete any pair $(\alpha,\beta)=(1,0)$.
\item[(S2)] Replace each $(\alpha_i,\beta_i)$ by
$(\alpha_i,\beta_i+k_i\alpha_i)$, where $\sum_{i=1}^n k_i=0$.
\item[(S3)] Replace any $(\alpha_i,\beta_i)$ by $(-\alpha_i,-\beta_i)$.
\end{enumerate}

Reversing the orientation of $M$ amounts to reversing the
orientation of either $h$ or the $q_i$. Thus, replacing each
$(\alpha_i,\beta_i)$ by $(\alpha_i,-\beta_i)$ amounts to
passing to a Seifert fibration of $-M$.

\begin{rem}
\label{rem:sign1}
(1) The operation (S3) corresponds to replacing meridian and longitude
$(\mu_i,\lambda_i)$ of $V_i$
by $(-\mu_i,-\lambda_i)$, which can be effected by a diffeomorphism
of $V_i=D^2\times S^1$ that reverses the orientation of the
$D^2$- and the $S^1$-factor. Usually, it is understood that
the choice is made such that $\alpha_i\geq 1$; for this reason
(S3) does not appear explicitly in \cite[Theorem~1.5]{jane83}.
For us, however, it will be important not to fix the sign of
the~$\alpha_i$, see Remark~\ref{rem:sign2} below.

(2) When there are no singular fibres, one needs $n\geq 1$ to describe
the nontrivial principal $S^1$-bundles over a given surface ~$\Sigma$. For
instance, the Hopf fibration on the $3$-sphere can be described by
the Seifert invariants $\bigl(0;(1,1)\bigr)$, see
Section~\ref{subsection:non-orientable}.

(3) With the help of (S1) to (S4) one can always arrange that
$\alpha_1=1$ and $\alpha_i>1$, $1\leq\beta_i<\alpha_i$ for $i\geq 2$.
We shall not assume, however, that the Seifert invariants have been
normalised in this way. If there is at least one singular fibre,
one can alternatively remove all pairs of the form $(1,\beta)$ by
an application of these equivalences.
\end{rem}

If $\Sigma$ is non-orientable, it can be written as a
connected sum of the real projective plane $\RP^2$ or the
Klein bottle $\RP^2\#\RP^2$ with an orientable surface, and the
singular fibres may be assumed to lie over the orientable part.
The description in terms of Seifert invariants is then as before;
the genus of the base surface is written as a negative number, that is,
$g(\RP^2)=-1$, $g(\RP^2\#\RP^2)=-2$ etc.
\subsection{The fundamental group}
The fundamental group of
\[ M=M\bigl( g;(\alpha_1,\beta_1),\ldots,(\alpha_n,\beta_n)\bigr),\]
as shown in \cite[\S~10]{seif33} or \cite[Section~6]{jane83},
has the presentation
\[ \langle a_1,b_1,\ldots,a_g,b_g,q_1,\dots,q_n,h\,|\,
h\text{ central},\, q_i^{\alpha_i}h^{\beta_i},\,
q_1\cdots q_n[a_1,b_1]\cdots[a_g,b_g]\rangle\]
for $g\geq 0$
and
\[ \langle a_1,\ldots,a_{|g|},q_1,\dots,q_n,h\,|\,
a_j^{-1}ha_j=h^{-1},\, [h,q_i],\,
q_i^{\alpha_i}h^{\beta_i},\,
q_1\cdots q_na_1^2\cdots a_{|g|}^2\rangle\]
for $g< 0$;
here a relation given as a word $w$ is to be read as $w=1$.

Geometrically, one wants to think of the base of a Seifert fibration
$M\rightarrow\Sigma$ as an orbifold with orbifold singularities
of multiplicity $|\alpha_1|,\ldots,|\alpha_n|$. One then writes
$\Sigma(|\alpha_1|,\ldots,|\alpha_n|)$ to indicate the order of the
cone points. The \emph{orbifold fundamental group}
$\piorb(\Sigma)$ is the quotient group of
$\pi_1(M)$ obtained by setting the class $h$ of the
regular fibre equal to~$1$.
\section{Lens spaces}
\label{section:lens}
\subsection{Definition of lens spaces}
For any pair $(p,q)$ of coprime integers with $p>0$,
the \emph{lens space} $L(p,q)$ is the quotient of the $3$-sphere
$S^3\subset\C^2$ under the
free $\Z_p$-action generated by
\begin{equation}
\label{eqn:Z_p}
(z_1,z_2)\longmapsto (\rme^{2\pi\rmi/p}z_1,
\rme^{2\pi\rmi q/p}z_2).
\end{equation}
This lens space inherits a natural orientation from~$S^3$.
Whenever we speak of a diffeomorphism of lens spaces, we mean an
orientation-preserving diffeomorphism of oriented manifolds.

Notice that $L(1,0)=S^3$. The definition of $L(p,q)$ can be
extended to arbitrary coprime integers by setting
$L(0,1):=S^2\times S^1$ and $L(p,q):=L(-p,-q)$ for $p<0$.
This is consistent with the surgery picture that we explain next.
\subsection{Surgery description}
\label{subsection:lens-surgery}
The lens space $L(p,q)$ with its natural orientation can be obtained
from $S^3$ by performing $(-p/q)$-surgery along an unknot,
see~\cite[p.~158]{gost99}. In other words, the lens space $L(p,q)$
is given by gluing two solid tori $V_i=D^2\times S^1$,
$i=1,2$, using the orientation-reversing gluing map
described by
\begin{equation}
\label{eqn:lens-surgery}
\mu_1=-q\mu_2+p\lambda_2,\;\;\; \lambda_1=r\mu_2+s\lambda_2,
\end{equation}
where the integers $r,s$ are chosen such that
\[ \begin{vmatrix}
-q & p\\
r  & s
\end{vmatrix}=-1.\]

Whenever $L(p,q)$ is written as the gluing of two solid tori,
the longitudes $\lambda_1,\lambda_2$ can be chosen such that the gluing map
is as described above. This corresponds with the fact
that $L(p,q)$ depends only on $p$ and the residue class of $q$ modulo~$p$,
and that $(r,s)$ may be changed by multiples of $(-q,p)$.
Furthermore, by exchanging the roles of the two solid tori, one sees that
$L(p,q)$ is diffeomorphic to $L(p,s)$.

As first shown by
Reidemeister~\cite{reid35}, these are the only orientation-preserving
diffeomorphisms between lens spaces, that is, $L(p,q)\cong L(p,q')$
if and only if $q\equiv q'$ or $qq'\equiv 1$ mod~$p$. Similarly, with
$-L(p,q')$ denoting the lens space $L(p,q')$ with the opposite
of its natural orientation, we have $L(p,q)\cong-L(p,q')$ if and only
if $q\equiv -q'$ or $qq'\equiv -1$ mod~$p$.
\section{Seifert fibrations on lens spaces}
\subsection{The model fibrations}
\label{subsection:model}
For any coprime pair $k_1,k_2\in\Z\setminus\{0\}$, the $S^1$-action
\[ \theta(z_1,z_2)=(\rme^{\rmi k_1\theta}z_1,\rme^{\rmi k_2\theta}z_2),\;\;
\theta\in\R/2\pi\Z,\]
on $S^3$ defines a Seifert fibration with singular fibres $S^1\times\{0\}$
and $\{0\}\times S^1$ of multiplicity $k_1$ and $k_2$, respectively.
This $S^1$-action commutes with
the $\Z_p$-action (\ref{eqn:Z_p}) and thus defines a Seifert fibration
on the quotient space $L(p,q)=S^3/\Z_p$.

In Section~\ref{section:model}
we shall prove that --- with the exception of two non-orientable
Seifert fibrations that will be described presently --- any Seifert fibration
of any lens space is isomorphic to one in this standard form. In particular,
we are going to determine the Seifert invariants of these model fibrations.
\subsection{The base of the fibration}
We begin with a simple observation.

\begin{lem}
\label{lem:two}
The base surface of a Seifert fibration of any lens space
is $S^2$ or $\RP^2$. If the base is $S^2$, there are at most
two singular fibres; if the base is $\RP^2$, there is no
singular fibre.
\end{lem}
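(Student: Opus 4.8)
The plan is to exploit the fundamental-group presentations recorded in Section~\ref{section:seifert} together with the fact that $\pi_1(L(p,q))\cong\Z_p$ is finite and abelian. First I would argue that the base cannot have genus $g\geq 1$: if $g\geq 1$ (orientable case) the presentation of $\pi_1(M)$ surjects, after killing $h$, onto $\piorb(\Sigma)$, which for a closed orientable surface of genus $\geq 1$ with finitely many cone points contains a free group of rank~$2$ (or at least a non-abelian, hence infinite, subgroup coming from $[a_1,b_1]$); similarly for $g\leq -2$ the element $a_1$ has infinite order in $\piorb$ (the relation $q_1\cdots q_n a_1^2\cdots a_{|g|}^2=1$ and $a_j^{-1}ha_j=h^{-1}$ do not make $\pi_1$ finite). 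In either case $\pi_1(M)$ would be infinite, contradicting $|\pi_1(L(p,q))|=p<\infty$. This leaves $g=0$, i.e.\ $\Sigma=S^2$, and $g=-1$, i.e.\ $\Sigma=\RP^2$.

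Next I would bound the number of singular fibres. For $\Sigma=S^2$ with cone points of orders $|\alpha_1|,\ldots,|\alpha_n|$ all $>1$, the orbifold fundamental group is the triangle-type group $\langle q_1,\ldots,q_n\mid q_i^{\alpha_i},\,q_1\cdots q_n\rangle$ (a quotient where $h=1$). It is classical that this group is infinite as soon as $n\geq 3$ and $\sum(1-1/|\alpha_i|)\geq 2$, and in fact for $n\geq 4$ it is always infinite, while for $n=3$ it is infinite unless $(|\alpha_1|,|\alpha_2|,|\alpha_3|)$ is a spherical triple $(2,2,k),(2,3,3),(2,3,4),(2,3,5)$. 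Rather than invoke the full classification, I would observe that $\pi_1(M)$ itself must be abelian: since $\pi_1(L(p,q))\cong\Z_p$, and in the genus-$0$ presentation $h$ is central, the quotient by $\langle h\rangle$ is abelian, forcing $[q_i,q_j]\in\langle h\rangle$ for all $i,j$; combined with the relations $q_i^{\alpha_i}h^{\beta_i}=1$ and $q_1\cdots q_n=1$ one checks that having three or more generators $q_i$ of order $\geq 2$ (mod $h$) is incompatible with the abelian quotient being cyclic of finite order — more precisely, $H_1(M;\Z)$ is then presented by a matrix whose Smith normal form has at least two invariant factors $>1$, so $H_1$ is not cyclic, contradicting $H_1(L(p,q))\cong\Z_p$. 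Hence $n\leq 2$, i.e.\ at most two singular fibres.

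For the $\RP^2$ case I would do the analogous homology computation: with the genus $-1$ presentation $\langle a_1,q_1,\ldots,q_n,h\mid a_1^{-1}ha_1=h^{-1},\,[h,q_i],\,q_i^{\alpha_i}h^{\beta_i},\,q_1\cdots q_n a_1^2\rangle$, abelianising gives $2h=0$ together with the relations $\alpha_i q_i+\beta_i h=0$ and $\sum q_i + 2a_1 = 0$. If some $|\alpha_i|>1$, one finds a $\Z_2$-summand (from $h$) surviving alongside another non-trivial factor, so again $H_1$ fails to be cyclic — unless $n=0$, in which case $H_1\cong\Z\oplus\Z_2$ or a quotient thereof is still non-cyclic unless $p$ is even and small; here I would instead note directly from the presentation that with $n\geq 1$ and $|\alpha_1|>1$ the group is either infinite or non-cyclic, whereas $\Z_p$ is finite cyclic, so $n=0$: no singular fibre over $\RP^2$.

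The main obstacle is making the homology/abelianisation argument clean enough to simultaneously (a) rule out $g\neq 0,-1$ without quoting the full Seifert-manifold classification, and (b) handle all the low-order sporadic cases (spherical triangle groups, prism-like manifolds) where the group stays finite but $H_1$ is non-cyclic — the point being precisely that $L(p,q)$ is characterised among Seifert manifolds by $H_1$ cyclic, so the cleanest route is to phrase everything as: the Smith normal form of the presentation matrix of $H_1(M)$ must have a single invariant factor, and then check that $g\geq 1$, $g\leq -2$, $n\geq 3$ over $S^2$, or $n\geq 1$ over $\RP^2$ each force a second invariant factor. I would present the $S^2$ argument in detail with an explicit presentation matrix for $H_1$, deduce $n\le 2$, and treat $g\ne 0,-1$ and the $\RP^2$ subcase by the same determinantal bookkeeping.
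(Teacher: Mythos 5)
Your reduction to $\Sigma=S^2$ or $\RP^2$ is essentially fine, although the paper's route is cleaner: lifting loops shows $\pi_1(M)\twoheadrightarrow\pi_1(\Sigma)$, so $\pi_1(\Sigma)$ must be cyclic, which handles all genera at once and also covers $L(0,1)=S^2\times S^1$, where your ``$\pi_1(M)$ would be infinite'' argument does not apply since $\pi_1=\Z$ there. The serious gap is in the step bounding the number of singular fibres over $S^2$. Your key claim --- that for $n\geq 3$ cone points of order $\geq 2$ the presentation matrix of $H_1(M)$ has at least two invariant factors $>1$, equivalently that $L(p,q)$ is characterised among Seifert manifolds by $H_1$ cyclic --- is false. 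The Poincar\'e homology sphere $M\bigl(0;(2,1),(3,1),(5,-4)\bigr)$ has presentation matrix of determinant $\pm 1$, hence $H_1=0$; the prism manifold $S^3/Q_{12}$, fibred over $S^2(2,2,3)$, has $H_1\cong\Z_4$. Both have three singular fibres and cyclic $H_1$, so no determinantal bookkeeping on $H_1$ can yield $n\leq 2$. The input genuinely has to be non-abelian, and this is exactly what the paper uses: $\piorb(\Sigma)=\pi_1(M)/\langle h\rangle$ is a quotient of $\Z_p$ and hence cyclic, whereas for $n=3$ with finite fundamental group it is a dihedral or platonic triangle group $\langle q_1,q_2,q_3\mid q_i^{\alpha_i},q_1q_2q_3\rangle$, which is never cyclic, and for $n\geq 4$ (or non-spherical triples) $\pi_1(M)$ is infinite by Seifert's Satz~9. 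You cannot sidestep this by passing to homology.

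The $\RP^2$ clause is also not closed by your argument, and here the obstruction is more than technical. With one cone point the orbifold group $\langle a,q\mid q^{\alpha},qa^2\rangle\cong\Z_{2\alpha}$ is cyclic, and from the stated presentation one computes $\pi_1\bigl(M(-1;(2,1))\bigr)$ directly: $q=a^{-2}$ and $q^2h=1$ give $h=a^4$, and $a^{-1}ha=h^{-1}$ then gives $a^8=1$, so $\pi_1\cong\Z_8$ and $H_1\cong\Z_8$ is cyclic. Thus neither your homological test nor the ``infinite or non-cyclic'' dichotomy excludes a singular fibre over $\RP^2$; indeed $M\bigl(-1;(2,1)\bigr)$ is a lens space (namely $L(8,\pm 3)$) carrying a Seifert fibration over $\RP^2$ with one singular fibre of multiplicity~$2$, so this part of the statement itself requires amending --- a point addressed in the authors' published correction --- and the proof in the paper shares the same blind spot for $n=1$. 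In summary: the $S^2$ bound needs the non-cyclicity of triangle groups rather than of $H_1$, and the $\RP^2$ case cannot be settled by the proposed abelianisation argument.
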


\begin{proof}
Any loop in the base $\Sigma$ of a Seifert fibration $M\rightarrow\Sigma$
can obviously be lifted to a loop in~$M$; for instance, use an auxiliary
Riemannian metric on $M$ to lift the loop in $\Sigma$ to a path
orthogonal to the fibres in~$M$, then join the endpoints along the
fibre. It follows that the homomorphism $\pi_1(M)\rightarrow
\pi_1(\Sigma)$ is surjective. Thus, if $\pi_1(M)$ is cyclic, then so
is $\pi_1(\Sigma)$. This proves the first statement.

A Seifert fibration
\[ M\bigl(0;(\alpha_1,\beta_1),\ldots,(\alpha_n,\beta_n)\bigr)\]
with base $S^2$ has finite fundamental group only if $n\leq 3$,
see \cite[Satz~9]{seif33}. If $n=3$ (and assuming that
the fundamental group is finite), the quotient group
$\piorb(\Sigma)$ of $\pi_1(M)$ is a platonic group with presentation
\[ \langle q_1,q_2,q_3\;|\;q_1^{\alpha_1},\, q_2^{\alpha_2},\, q_3^{\alpha_3},
\, q_1q_2q_3\rangle,\]
where
\[ (\alpha_1,\alpha_2,\alpha_3)\in\bigl\{(2,2,m), (2,3,3), (2,3,4),
(2,3,5)\bigr\};\]
this case cannot occur if $\pi_1(M)$ is cyclic.

A Seifert fibration
\[ M\bigl(-1;(\alpha_1,\beta_1),\ldots,(\alpha_n,\beta_n)\bigr) \]
with base $\RP^2$ leads to
\[ \piorb(\Sigma)=\langle a,q_1,\ldots,q_n\;|\;q_1^{\alpha_1},\ldots,
q_n^{\alpha_n},\, q_1\cdots q_n a^2\rangle. \]
This group is abelian if and only if $\alpha_i=\pm 1$ for all $i=1,\ldots ,n$.
In that case, using the equivalences (S0)--(S3) for Seifert invariants
described in Section~\ref{section:seifert}, we can pass to an isomorphic
Seifert fibration $M\bigl(-1;(1,b)\bigr)$.
\end{proof}
\subsection{Non-orientable base}
\label{subsection:non-orientable}
Here we deal with the case where the base surface is~$\RP^2$.

\begin{prop}
\label{prop:lens-rp2}
A lens space that fibres over $\RP^2$
is diffeomorphic to $L(4,1)$ or $L(4,3)$.
Each of these lens spaces admits a unique Seifert fibration
with base $\RP^2$.
\end{prop}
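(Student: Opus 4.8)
The plan is to start from the conclusion of Lemma~\ref{lem:two}: a Seifert fibration with base $\RP^2$ has no singular fibre, and after normalising via the equivalences (S0)--(S3) it can be written as $M\bigl(-1;(1,b)\bigr)$ for some integer $b$. So the real task is twofold: first, identify which lens space $M\bigl(-1;(1,b)\bigr)$ is, as a function of $b$; second, pin down which values of $b$ give rise to \emph{isomorphic} Seifert fibrations, so as to establish the uniqueness claim.

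For the first part I would reconstruct the manifold directly from the gluing data. The base $\Sigma_0$ is a M\"obius band (the complement of a disc in $\RP^2$), and $M_0=\Sigma_0\times S^1$; one glues in a single solid torus $V_1$ along $\partial M_0$ using $\mu_1=q_1+bh$, $\lambda_1=q_1'h$ with the appropriate determinant condition, as in~(\ref{eqn:seifert}). To compute the diffeomorphism type I would pass to a slightly different decomposition: cut the M\"obius band into a disc, so that $M_0$ over the M\"obius band is itself obtained from a solid torus by a twisted gluing, and thereby realise $M\bigl(-1;(1,b)\bigr)$ as a union of two solid tori with an explicitly computable gluing matrix. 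Comparing this matrix with~(\ref{eqn:lens-surgery}) and using the Reidemeister classification quoted at the end of Section~\ref{section:lens} then reads off $p$ and $q$. I expect the orbifold/fundamental-group computation already appearing in the proof of Lemma~\ref{lem:two} — where $\piorb(\RP^2)$ is abelian of order $4$ in this case — to serve as a useful sanity check that $p=4$, and the two residue classes $q\equiv 1$ and $q\equiv 3$ should emerge from the sign ambiguity inherent in the M\"obius-band gluing (equivalently, from the two orientations one can put on~$M$).

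For the uniqueness statement I would analyse how $b$ can be changed without altering the isomorphism type of the Seifert fibration. The operations (S1)--(S3) are of limited help once the data have been reduced to $(1,b)$ with a single pair, but the genuine flexibility comes from the non-orientable base: a diffeomorphism of the M\"obius band that is isotopic to a reflection, lifted to $M_0$, together with the freedom in choosing the fibre coordinate, should identify $M\bigl(-1;(1,b)\bigr)$ with $M\bigl(-1;(1,b')\bigr)$ for $b'$ in a suitable arithmetic relation with $b$ (I anticipate something like $b'\equiv -b$ together with an overall shift, reflecting that over a non-orientable base the Euler-number-type invariant is only defined modulo~$2$). Carrying this out carefully, one should find that all $b$ giving $L(4,1)$ are equivalent to one another, and likewise for $L(4,3)$, yielding exactly one Seifert fibration over $\RP^2$ for each of the two lens spaces.

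The main obstacle will be the bookkeeping in this last step: getting the gluing matrices exactly right through the M\"obius-band-to-disc surgery, keeping track of orientations throughout (since the whole point of Remark~\ref{rem:sign1} is that we are \emph{not} normalising the signs of the $\alpha_i$), and correctly identifying which reparametrisations of the non-orientable base genuinely extend to fibre-preserving diffeomorphisms of~$M$. A clean way to organise this is to exhibit one explicit model fibration of $L(4,1)$ over $\RP^2$ — for instance as a quotient of a standard Seifert fibration of $S^3$, in the spirit of Section~\ref{subsection:model} — verify its Seifert invariants are of the form $M\bigl(-1;(1,b)\bigr)$, and then show every other candidate is isomorphic to it; the case $L(4,3)$ follows by reversing orientation. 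I do not expect any deep difficulty beyond this careful tracking of signs and gluing data.
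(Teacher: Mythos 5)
Your plan has a genuine gap at its central step: the claim that cutting the M\"obius band into a disc realises $M\bigl(-1;(1,b)\bigr)$ as a union of two solid tori with a gluing matrix comparable to~(\ref{eqn:lens-surgery}). The piece of $M$ lying over the M\"obius band is the \emph{orientable} circle bundle over the M\"obius band (the monodromy over the core must reverse the fibre orientation, since $M$ is orientable), and this is the orientable twisted $I$-bundle over the Klein bottle, not a solid torus: its fundamental group is $\langle a,h\mid a^{-1}ha=h^{-1}\rangle$. So there is no genus-one Heegaard splitting arising from your decomposition, and the comparison with~(\ref{eqn:lens-surgery}) does not get started. To identify the manifold along these lines you would have to refibre this piece over $D^2(2,2)$ --- this is exactly the content of the paper's remark invoking \cite[Theorem~5.1]{jane83}, giving $M\bigl(-1;(1,\pm1)\bigr)\cong M\bigl(0;(2,1),(2,-1),(\mp1,1)\bigr)$ --- or argue differently. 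The paper instead computes fundamental groups directly: $\pi_1\bigl(M(-1;(1,b))\bigr)/\langle q\rangle$ is abelian only for $b\in\{\pm1,\pm2\}$, the case $b=\pm2$ is excluded because that quotient is $\Z_2\oplus\Z_2$ (non-cyclic), and for $b=\pm1$ one gets $\pi_1(M)\cong\Z_4$. Note also a slip in your sanity check: $\piorb(\RP^2)$ is $\Z_2$, not of order $4$; it is $\pi_1(M)$ that is $\Z_4$.

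The second gap is in the uniqueness argument. You propose to classify the moves on $b$ coming from reparametrisations of the non-orientable base, but you never pin down which $b$ occur at all; in particular the exclusion of $b=\pm2$ is essential and is not a consequence of any such equivalence. Once one knows, from the fundamental-group restriction, that only $b=\pm1$ survive, uniqueness is immediate: there are at most two candidate fibrations in total, and since one can exhibit a fibration over $\RP^2$ on each of the two distinct lens spaces $L(4,1)$ and $L(4,3)$ --- the paper does this by lifting the antipodal map of $S^2$ through the positive and negative Hopf fibrations to $\Z_4$-actions on $S^3$ and conjugating them to the standard actions by an explicit quaternionic map --- each lens space carries exactly one. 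Your closing suggestion to exhibit explicit models is the right instinct and is what the paper actually does for the existence half; but the counting that makes those models exhaust all fibrations comes from the fundamental-group computation, not from an analysis of equivalences of the pair $(1,b)$, so without that computation your argument does not close.
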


\begin{proof}
Write the given fibration as $M=M\bigl(-1;(1,b)\bigr)\bigr)$.
The quotient group
\[ \pi_1\bigl(M\bigl(-1;(1,b)\bigr)\bigr)/\langle q\rangle=
\langle a,h\; |\; a^{-1}ha=h^{-1},\, h^b,\, a^2\rangle \]
is abelian only if $h^2=1$. So we need $b\in\{\pm 1,\pm 2\}$.
For $b=\pm 2$ we have $\pi_1(M)/\langle q\rangle\cong
\Z_2\oplus\Z_2$, which is excluded when $M$ is a lens space.
For $b=\pm 1$ we have
\[ \pi_1(M)\cong\langle a,q,h\; |\; a^{-1}ha=h^{-1},\,
[h,q],\; qh^{\pm 1},\, qa^2\rangle.\]
A straightforward computation reduces this to
\[ \pi_1(M)\cong\langle a\;|\; a^4\rangle\cong\Z_4.\]
Thus, there are at most two potential fibrations of a lens space
over $\RP^2$, and only the lens spaces $L(4,1)$ and $L(4,3)$
might arise in this way.
We now exhibit, on each of these two lens spaces, an $S^1$-fibration 
with base $\RP^2$.

The positive Hopf fibration of the $3$-sphere is the map
\[ \begin{array}{rcl}
\C^2\supset S^3 & \longrightarrow & S^2=\CP^1\\
(z_1,z_2)       & \longmapsto     & [z_1:z_2];
\end{array}\]
this corresponds to the $S^1$-action
$\theta(z_1,z_2)=(\rme^{\rmi\theta}z_1,\rme^{\rmi\theta}z_2)$.
The negative Hopf fibration, corresponding to the $S^1$-action
$\theta(z_1,z_2)=(\rme^{\rmi\theta}z_1,\rme^{-\rmi\theta}z_2)$,
is defined by
\[ (z_1,z_2) \longmapsto [z_1:\oz_2]. \]
Our aim is to lift the antipodal $\Z_2$-action on $S^2$
to a $\Z_4$-action on $S^3$, using either Hopf fibration. Each lift 
induces an $S^1$-bundle $S^3/\Z_4\rightarrow\RP^2$.

\begin{rem}
A consequence of this description is that the resulting
Seifert fibrations of $L(4,1)$ and
$L(4,3)$ are isomorphic to the unit tangent bundle of $\RP^2$
with one or the other orientation.
\end{rem}

First we need to describe the antipodal action in terms of homogeneous
coordinates on $S^2=\CP^1$.
The stereographic projection $\R^3\supset S^2\rightarrow\R^2\equiv\C$
from the north pole $(0,0,1)$ is given by
\[ (x,y,t)\longmapsto \frac{x+\rmi y}{1-t}=:z;\]
the antipodal point is mapped as
\[ (-x,-y,-t)\longmapsto \frac{-(x+\rmi y)}{1+t}=:\psi (z).\]
It follows that $\oz\cdot\psi(z)=-1$. Thus, in homogeneous
coordinates the antipodal map is described by
\[ [z_1:z_2]\longmapsto [\oz_2:-\oz_1].\]

For the positive Hopf fibration, this $\Z_2$-action is covered
by the $\Z_4$-action on $S^3$ generated by
\[ A_+\co (z_1,z_2)\longmapsto (\oz_2,-\oz_1);\]
for the negative Hopf fibration, the lifted action is generated by
\[ A_-\co (z_1,z_2)\longmapsto (z_2,-z_1).\]

On the other hand, the $\Z_4$-actions on $S^3$ producing the quotients
$L(4,1)$ and $L(4,3)$ are generated by
\[ A_1\co (z_1,z_2)\longmapsto (\rmi z_1,\rmi z_2)\]
and
\[ A_3\co (z_1,z_2)\longmapsto (\rmi z_1,-\rmi z_2),\]
respectively.

In quaternionic notation $z_1+z_2\rmj=:a_0+a_1\rmi+a_2\rmj+a_3\rmk=:a\in
S^3\subset\Hp$,
these maps take the simple form
\[ A_+(a)=-\rmj\cdot a,\;\;\;
A_-(a)=-a\cdot\rmj,\;\;\;
A_1(a)=\rmi\cdot a,\;\;\;
A_3(a)=a\cdot\rmi.\]
A straightforward calculation then shows that the map
$\phi\co S^3\rightarrow S^3$ defined by
\[ \phi(a_0+a_1\rmi+a_2\rmj+a_3\rmk):= a_2+a_0\rmi+a_1\rmj+a_3\rmk\]
conjugates these actions as follows:
\[ A_1\circ\phi=\phi\circ A_+,\;\;\;
A_3\circ\phi=\phi\circ A_-.\]
Such conjugating maps can be found with an ansatz $\phi(a)=bac$,
where $b$ and $c$ are unit quaternions. Our choice corresponds to
$b=(1+\rmi-\rmj+\rmk)/2$ and $c=(1+\rmi-\rmj-\rmk)/2$.
\end{proof}
\subsection{Orientable base}
Any Seifert fibration over $S^2$ with at most two singular
fibres has a total space that is obtained by gluing two
solid tori, i.e.\ a lens space. The following theorem from \cite{jane83}
shows how to determine the type of this lens space from the Seifert
invariants. We include the proof since
the argument will be relevant for answering the converse question:
how to determine the Seifert invariants of all Seifert fibrations of
a given lens space.

\begin{thm}[{\cite[Theorem~4.4]{jane83}}]
\label{thm:jane4.4}
The lens space $L(p,q)$ is diffeomorphic to the Seifert manifold
$M\bigl(0;(\alpha_1,\beta_1),(\alpha_2,\beta_2)\bigr)$, provided that
\[ p = \begin{vmatrix}
        \alpha_1 & \alpha_2\\
        -\beta_1 & \beta_2
       \end{vmatrix}
\;\;\;\text{and}\;\;\;
   q = \begin{vmatrix}
        \alpha_1 & \alpha_2'\\
        -\beta_1 & \beta_2'
       \end{vmatrix},\]
where $(\alpha_2',\beta_2')$ is a solution of
\[ \begin{vmatrix}
    \alpha_2 & \alpha_2'\\
    \beta_2  & \beta_2'
   \end{vmatrix}
 =1. \]
\end{thm}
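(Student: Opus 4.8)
The plan is to compute the gluing data for the lens space obtained from $M\bigl(0;(\alpha_1,\beta_1),(\alpha_2,\beta_2)\bigr)$ by following the construction of Seifert invariants recalled in Section~\ref{section:seifert}. With $n=2$ and base $S^2$, the surface $\Sigma_0 = S^2 \setminus \Int(D^2_1 \sqcup D^2_2)$ is an annulus, so $M_0 = \Sigma_0 \times S^1$ is a thickened torus; the solid tori $V_1, V_2$ are glued to its two boundary components. The first step is to express everything in a single copy of $\partial V_1$. On $\partial M_0$, the curves $q_1, q_2, h$ satisfy $q_1 + q_2 \sim 0$ (since $-\partial\Sigma_0 = S^1_1 \sqcup S^1_2$ bounds the annulus on the $q$-level), while $h$ is common to both boundary tori. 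Thus, viewing $M = V_1 \cup_{M_0} V_2$ as a gluing of just the two solid tori, I can push the generators $q_2, h$ of $\partial V_2$ through $M_0$ to $\partial V_1$, where $q_2 \mapsto -q_1$ and $h \mapsto h$.

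Next I would write the meridians and longitudes in terms of a fixed basis. From (\ref{eqn:seifert}) on the $V_1$ side, $\mu_1 = \alpha_1 q_1 + \beta_1 h$ and $\lambda_1 = \alpha_1' q_1 + \beta_1' h$; equivalently, on the $(q_1,h)$-basis of $\partial V_1 = \partial V_2$ (after the identification above, with $q_2 = -q_1$), the $V_2$ meridian becomes $\mu_2 = \alpha_2 q_2 + \beta_2 h = -\alpha_2 q_1 + \beta_2 h$ and $\lambda_2 = -\alpha_2' q_1 + \beta_2' h$. The goal is now to read off the gluing map $\partial V_1 \to \partial V_2$ in the normal form (\ref{eqn:lens-surgery}), i.e.\ to find the unique (mod $p$) integer $q$ and sign so that $\mu_1 = -q\mu_2 + p\lambda_2$ with the determinant condition. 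Since $(q_1, h)$ and $(\mu_2, \lambda_2)$ are two bases of $H_1(\partial V_2)$ related by a unimodular matrix, I would invert the relation expressing $(\mu_2,\lambda_2)$ in terms of $(q_1,h)$ — whose determinant is $\begin{vmatrix} -\alpha_2 & \beta_2 \\ -\alpha_2' & \beta_2'\end{vmatrix} = -\begin{vmatrix}\alpha_2 & \alpha_2'\\ \beta_2 & \beta_2'\end{vmatrix} = -1$ by the choice of $(\alpha_2',\beta_2')$ — and substitute into $\mu_1 = \alpha_1 q_1 + \beta_1 h$ to get $\mu_1$ as an integer combination of $\mu_2$ and $\lambda_2$. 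The coefficient of $\lambda_2$ will come out to $\pm\begin{vmatrix}\alpha_1 & \alpha_2\\ -\beta_1 & \beta_2\end{vmatrix}$ and the coefficient of $\mu_2$ to $\pm\begin{vmatrix}\alpha_1 & \alpha_2'\\ -\beta_1 & \beta_2'\end{vmatrix}$; matching against $\mu_1 = -q\mu_2 + p\lambda_2$ then yields $p$ and $q$ as claimed, using the freedom $L(p,q) = L(-p,-q)$ and $(r,s) \mapsto (r,s) + k(-q,p)$ to fix signs and reduce mod~$p$.

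The main obstacle I anticipate is bookkeeping the orientations correctly: the gluing of $V_i$ to $M_0$ in (\ref{eqn:seifert}) and the gluing of the two solid tori in (\ref{eqn:lens-surgery}) both involve orientation conventions (note $-\partial\Sigma_0$ and the orientation-reversing gluing map), and the relation $q_1 + q_2 \sim 0$ versus $q_1 - q_2 \sim 0$ hinges on exactly how the boundary of the annulus is oriented. Getting the sign of the $2\times 2$ determinants — in particular the $-\beta_1$ (rather than $+\beta_1$) in the statement — requires tracking these consistently; I would pin it down by testing the formula on a known example, e.g.\ the Hopf fibration $M(0;(1,1))$ or $M(0;(1,0),(1,0)) = S^2\times S^1$, and on $M(0;(1,0),(p,q))$ which should visibly give $L(p,q)$ or $L(p,-q)$. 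Once the signs are fixed on one case, the general computation is the routine linear algebra outlined above, and the determinant identity $\begin{vmatrix}\alpha_2 & \alpha_2'\\ \beta_2 & \beta_2'\end{vmatrix} = 1$ guarantees that the change of basis is unimodular so that no spurious factors appear.
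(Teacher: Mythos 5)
Your proposal is correct and follows essentially the same route as the paper: decompose over the annulus $\Sigma_0$ to get $q_2=-q_1$, express $\mu_1$ in the $(\mu_2,\lambda_2)$-basis by a unimodular change of basis, and compare with the normal form~(\ref{eqn:lens-surgery}). The paper simply carries out the $2\times 2$ matrix multiplication explicitly, whereupon the signs (including the $-\beta_1$) fall out without needing to be calibrated on examples.
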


\begin{proof}
In the notation of Section~\ref{section:seifert}, $\Sigma_0$
is an annulus, which gives us the relation $q_2=-q_1$. With
(\ref{eqn:seifert}) we find
\begin{eqnarray*}
\begin{pmatrix}\mu_1\\ \lambda_1\end{pmatrix} & = &
      \begin{pmatrix}
      \alpha_1  & \beta_1\\
      \alpha_1' & \beta_1'
      \end{pmatrix}
      \begin{pmatrix}q_1\\ h\end{pmatrix}\\
& = & \begin{pmatrix}
      -\alpha_1  & \beta_1\\
      -\alpha_1' & \beta_1'
      \end{pmatrix}
      \begin{pmatrix}q_2\\ h\end{pmatrix}\\
& = & \begin{pmatrix}
      -\alpha_1  & \beta_1\\
      -\alpha_1' & \beta_1'
      \end{pmatrix}
      \begin{pmatrix}
      \beta_2'   & -\beta_2\\
      -\alpha_2' & \alpha_2
      \end{pmatrix}
      \begin{pmatrix}\mu_2\\ \lambda_2\end{pmatrix}\\
& = & \begin{pmatrix}
      -(\alpha_1\beta_2'+\beta_1\alpha_2')   &
              \alpha_1\beta_2+\beta_1\alpha_2\\
      -(\alpha_1'\beta_2'+\beta_1'\alpha_2') &
              \alpha_1'\beta_2+\beta_1'\alpha_2
      \end{pmatrix}
      \begin{pmatrix}\mu_2\\ \lambda_2\end{pmatrix}.
\end{eqnarray*}
The theorem follows by comparing this with~(\ref{eqn:lens-surgery}).
\end{proof}

\begin{rem}
Alternatively, but less explicitly than in the proof of
Proposition~\ref{prop:lens-rp2},
one can determine the lens spaces $M\bigl(-1;(1,\pm 1)\bigr)$
by appealing to \cite[Theorem~5.1]{jane83} (whose proof is left
as an exercise with hints). According to that theorem, there is
an orientation-preserving diffeomorphism (\emph{not} an
isomorphism of Seifert fibrations)
\[ M\bigl(-1;(1,\pm 1)\bigr) \cong
M\bigl(0;(2,1),(2,-1),(\mp1,1)\bigr).\]
The latter Seifert fibration is isomorphic to
\[ M\bigl(0;(2,1),(2,-1),(1,\mp 1)\bigr)=
M\bigl(0;(2,\mp 1),(2,\mp 1)\bigr).\]
With Theorem~\ref{thm:jane4.4} one finds that $M$ is diffeomorphic
(as an oriented manifold) to
$L(-4,3)=L(4,-3)=L(4,1)$ for $b=+1$,
and to $L(4,3)$ for $b=-1$.
\end{rem}

Using Theorem~\ref{thm:jane4.4}, we can deal directly with
the lens space $L(0,1)=S^2\times S^1$.

\begin{prop}
A complete list of the Seifert fibrations of $S^2\times S^1$
is provided by
\[ M\bigl(0;(\alpha,\beta),(\alpha,-\beta)\bigr),\]
where $(\alpha,\beta)$ is any pair of coprime integers with
$\alpha>0$ and $\beta\geq 0$.
\end{prop}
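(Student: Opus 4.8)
The plan is to use Theorem~\ref{thm:jane4.4} in both directions. First I would verify that each of the listed Seifert fibrations $M\bigl(0;(\alpha,\beta),(\alpha,-\beta)\bigr)$ with $\alpha>0$, $\beta\geq 0$ does indeed produce $S^2\times S^1=L(0,1)$. To do this, I set $\alpha_1=\alpha$, $\beta_1=\beta$, $\alpha_2=\alpha$, $\beta_2=-\beta$, and compute the determinant $p=\alpha_1\beta_2+\beta_1\alpha_2$ from the theorem; this gives $p=\alpha(-\beta)+\beta\alpha=0$. Since the Seifert manifold is a lens space $L(p,q)$ and $p=0$ forces $L(0,1)=S^2\times S^1$ (once one checks $q=1$, which follows because $L(0,q)$ only makes sense for $q=\pm 1$, i.e.\ $\gcd(p,q)=1$ with $p=0$; actually one should compute $q$ explicitly from the second determinant using a choice of $(\alpha_2',\beta_2')$ and observe $\gcd$ considerations pin it down). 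This establishes that every manifold on the list is $S^2\times S^1$.

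Next, for the converse, I would start from an arbitrary Seifert fibration of $S^2\times S^1$. By Lemma~\ref{lem:two}, the base is $S^2$ (the $\RP^2$ case is excluded since $\pi_1(S^2\times S^1)=\Z$ is infinite, or more directly because Proposition~\ref{prop:lens-rp2} identifies the only lens spaces fibring over $\RP^2$ as $L(4,1)$ and $L(4,3)$), with at most two singular fibres. So the fibration is $M\bigl(0;(\alpha_1,\beta_1),(\alpha_2,\beta_2)\bigr)$ for some Seifert invariants. By Theorem~\ref{thm:jane4.4}, the condition that this be $L(0,1)$ is precisely $p=\alpha_1\beta_2+\beta_1\alpha_2=0$. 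Since $\gcd(\alpha_i,\beta_i)=1$, the equation $\alpha_1\beta_2=-\beta_1\alpha_2$ forces $\alpha_1=\pm\alpha_2$ and correspondingly $\beta_2=\mp\beta_1$ (because $\alpha_1\mid\beta_1\alpha_2$ with $\gcd(\alpha_1,\beta_1)=1$ gives $\alpha_1\mid\alpha_2$, and symmetrically $\alpha_2\mid\alpha_1$). Thus $(\alpha_1,\beta_1),(\alpha_2,\beta_2)=(\alpha_1,\beta_1),(\pm\alpha_1,\mp\beta_1)$.

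Finally I would normalise. Using operation (S3) on the second pair, the case $(\alpha_1,\beta_1),(-\alpha_1,\beta_1)$ becomes $(\alpha_1,\beta_1),(\alpha_1,-\beta_1)$, so without loss of generality the fibration is $M\bigl(0;(\alpha_1,\beta_1),(\alpha_1,-\beta_1)\bigr)$. Applying (S3) to both pairs simultaneously lets me assume $\alpha_1>0$ (note $\alpha_1\neq 0$ by definition of Seifert invariants), and this preserves the form of the list since it sends $(\alpha_1,\beta_1),(\alpha_1,-\beta_1)$ to $(-\alpha_1,-\beta_1),(-\alpha_1,\beta_1)$ which is the pair with parameter $-\beta_1$. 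Then (S3) on just one pair swaps the sign of $\beta_1$, so I may assume $\beta_1\geq 0$. This gives exactly the claimed normal form. I should also check that distinct pairs $(\alpha,\beta)$ on the list with $\alpha>0$, $\beta\geq 0$ give non-isomorphic fibrations, or at least acknowledge this is what "complete list" means; this follows by running through (S0)--(S3) and checking none of them relate two such normalised pairs except trivially (permutation (S0) sends $(\alpha,\beta),(\alpha,-\beta)$ to itself up to the labelling, and (S2) with $k_1+k_2=0$ would change $\beta$ on one pair and $-\beta$ on the other, destroying the symmetric form unless $k_1=k_2=0$).

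The main obstacle I expect is the bookkeeping in the final normalisation: one must be careful that each application of (S3)---which I am free to use per Remark~\ref{rem:sign1}(1) since signs of the $\alpha_i$ are not fixed here---is tracked correctly so that the end result genuinely has $\alpha>0$ \emph{and} $\beta\geq 0$ simultaneously, and that the family as parametrised has no redundancy. The divisibility argument extracting $\alpha_1=\pm\alpha_2$ from $p=0$ is routine but is the one genuinely non-formal step; everything else is an application of Theorem~\ref{thm:jane4.4} together with the move list.
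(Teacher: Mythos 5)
Your argument is essentially the paper's proof: normalise the signs of the $\alpha_i$ with (S3), then observe that $0=p=\alpha_1\beta_2+\beta_1\alpha_2$ together with $\gcd(\alpha_i,\beta_i)=1$ forces $\alpha_1=\alpha_2$ and $\beta_2=-\beta_1$; your extra verifications (base must be $S^2$, the forward direction via Theorem~\ref{thm:jane4.4}, and $q=\pm1$ when $p=0$) are fine. Two small corrections to your final normalisation and uniqueness discussion: first, the move that flips the sign of $\beta$ is simply the permutation (S0), which sends $(\alpha,\beta),(\alpha,-\beta)$ to $(\alpha,-\beta),(\alpha,\beta)$ --- applying (S3) to only one pair also negates that pair's $\alpha$ and leaves the symmetric form. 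Second, your claim that (S2) ``destroys the symmetric form unless $k_1=k_2=0$'' is false: taking $k_1=k=-k_2$ turns $(\alpha,\beta),(\alpha,-\beta)$ into $(\alpha,\beta+k\alpha),(\alpha,-(\beta+k\alpha))$, so the list is in fact redundant (e.g.\ $M\bigl(0;(2,1),(2,-1)\bigr)\cong M\bigl(0;(2,3),(2,-3)\bigr)$); this does not affect the proposition, which asserts only completeness of the list, and the paper makes no irredundancy claim either.
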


\begin{proof}
In a description $L(0,1)=M\bigl(0;(\alpha_1,\beta_1),
(\alpha_2,\beta_2)\bigr)$ we may assume by (S3) that $\alpha_1,\alpha_2>0$.
The equation $0=p=\alpha_1\beta_2+\beta_1\alpha_2$ with
$\gcd(\alpha_i,\beta_i)=1$ is then equivalent to $\alpha_1=\alpha_2$
and $\beta_1=-\beta_2$.
\end{proof}

Of course, for $\beta=0$ we have $\alpha=1$; this corresponds to the
obvious $S^1$-fibration of $S^2\times S^1$.

We now turn the proof of Theorem~\ref{thm:jane4.4} on its head, as it were,
with the aim of determining all Seifert fibrations of
a fixed lens space $L(p,q)$, up to isomorphism,
where from now on $p>0$ is understood.

Given a Seifert fibration
\[ \pi\co L(p,q)=M\bigl(0;(\alpha_1,\beta_1),(\alpha_2,\beta_2)\bigr)
\longrightarrow S^2,\]
choose a circle $C\subset S^2$ separating the (at most) two orbifold points,
and decompose $L(p,q)$ into two solid tori $V_1,V_2$ along $\pi^{-1}(C)$.
As explained in Section~\ref{subsection:lens-surgery}, we can choose
longitudes on $\partial V_1$ and $\partial V_2$ such that the
gluing map of the two solid tori is given by~(\ref{eqn:lens-surgery}).
On the other hand, $L(p,q)$ may also be thought of as being
obtained by gluing $V_1,V_2$ to a thickening of the torus $\pi^{-1}(C)$,
using the identifications~(\ref{eqn:seifert}).
We then have
\[ \begin{pmatrix}-q_2\\ h\end{pmatrix}=
\begin{pmatrix}
-\beta_2'  & \beta_2\\
-\alpha_2' & \alpha_2
\end{pmatrix}
\begin{pmatrix}\mu_2\\ \lambda_2\end{pmatrix}\]
and
\[ \begin{pmatrix}q_1\\ h\end{pmatrix}=
\begin{pmatrix}
\beta_1'   & -\beta_1\\
-\alpha_1' & \alpha_1
\end{pmatrix}
\begin{pmatrix}\mu_1\\ \lambda_1\end{pmatrix}=
\begin{pmatrix}
\beta_1'   & -\beta_1\\
-\alpha_1' & \alpha_1
\end{pmatrix}
\begin{pmatrix}
-q & p\\
r  & s
\end{pmatrix}
\begin{pmatrix}\mu_2\\ \lambda_2\end{pmatrix}.\]

\begin{rem}
\label{rem:sign2}
Once the gluing map (\ref{eqn:lens-surgery}) is given, we
are no longer free to replace only one of $(\mu_i,\lambda_i)$ by
$(-\mu_i,-\lambda_i)$. For this reason, we may not fix the
signs of $\alpha_1$ and $\alpha_2$ simultaneously, cf.\
Remark~\ref{rem:sign1}.
\end{rem}

By expressing the relation $(q_1,h)=(-q_2,h)$ in terms of
$(\mu_2,\lambda_2)$, we arrive at the identities
\begin{equation}
\label{eqn:relations}
\begin{array}{rcl}
\alpha_2  & = & s\alpha_1-p\alpha_1',\\[1mm]
\alpha_2' & = & -r\alpha_1-q\alpha_1',\\[1mm]
\beta_2   & = & -s\beta_1+p\beta_1',\\[1mm]
\beta_2'  & = & r\beta_1+q\beta_1'.
\end{array}
\end{equation}

Set $\alpha=\gcd(\alpha_1,\alpha_2)$ and $\alpha_i^0=\alpha_i/\alpha$,
so that $\gcd(\alpha_1^0,\alpha_2^0)=1$.
The first equation above can then be written as
\begin{equation}
\label{eqn:alpha}
p\alpha_1'=\alpha (s\alpha_1^0-\alpha_2^0).
\end{equation}

We first deal with the special case $s\alpha_1^0=\alpha_2^0$.
Then, since $p>0$, we have $\alpha_1'=0$, and hence $\alpha_1\beta_1'=1$.
Without loss of generality we may assume $\alpha_1=1$. This gives
$\beta_1'=1$ and, from the first equation of~(\ref{eqn:relations}),
$\alpha_2=s$. By applying (S2) we may assume $\beta_1=0$.
The third equation of (\ref{eqn:relations}) then gives $\beta_2=p$.
With (S1) we may remove the pair $(\alpha_1,\beta_1)=(1,0)$,
leaving us with a Seifert fibration $M\bigl(0;(s,p)\bigr)$.
Notice that  $qs\equiv 1$ mod $p$. Conversely, one checks
easily with Theorem~\ref{thm:jane4.4} that this condition on $s$
guarantees that the resulting lens space is $L(p,q)$.
Since we may reverse the roles of $q$ and $s$ as described in
Section~\ref{subsection:lens-surgery}, this proves the following.

\begin{prop}
\label{prop:one}
A complete list of the Seifert fibrations of $L(p,q)$, $p>0$,
over $S^2$ with at most one singular fibre is given by
\[ M\bigl(0;(\alpha_2,p)\bigr),\]
where $\alpha_2$ is any non-zero integer with $\alpha_2\equiv q$
or $\alpha_2q\equiv 1$ mod $p$.
\qed
\end{prop}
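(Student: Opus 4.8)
The plan is to treat this as two implications, exploiting the partial computation already performed in the main text just before the statement. That computation handled the special case $s\alpha_1^0=\alpha_2^0$ and reduced it, via the Seifert moves (S0)--(S3), to a fibration $M\bigl(0;(s,p)\bigr)$; and it observed that $qs\equiv 1\pmod p$ in that situation. So for the forward direction, I would argue that \emph{every} Seifert fibration of $L(p,q)$ over $S^2$ with at most one singular fibre falls into this special case. Indeed, "at most one singular fibre" means that after applying (S1) to delete a $(1,0)$ pair we are left with at most one pair $(\alpha_2,\beta_2)$; equivalently, in a two-pair description $M\bigl(0;(\alpha_1,\beta_1),(\alpha_2,\beta_2)\bigr)$ we may take $\alpha_1=1$. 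With $\alpha_1=1$ the quantity $\alpha=\gcd(\alpha_1,\alpha_2)$ equals $1$, so $\alpha_1^0=1$ and $\alpha_2^0=\alpha_2$, and equation~(\ref{eqn:alpha}) reads $p\alpha_1'=s-\alpha_2$. But the text's case division was exactly on whether $s\alpha_1^0=\alpha_2^0$, i.e. whether $\alpha_1'=0$; I need to check that the \emph{other} branch (with $\alpha_1'\neq 0$, hence a genuine singular fibre of multiplicity $|\alpha_1|=1$... wait) cannot produce fewer than two singular fibres unless we are back in the special case. The cleaner way: with $\alpha_1=1$ the first fibre is always non-singular, so having "at most one singular fibre" is automatic, and I should instead normalise using (S2) to kill $\beta_1$, then (S1) to delete $(1,0)$ — exactly as done in the text — without assuming $\alpha_1'=0$. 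Re-examining, after setting $\alpha_1=1$, $\beta_1=0$ via (S2), the relations~(\ref{eqn:relations}) give $\alpha_2=s$, $\beta_2=p$ directly (the value of $\alpha_1'$ becomes irrelevant once $\beta_1=0$, since $\beta_1'=1$). Hence the text's computation in fact covers \emph{all} such fibrations, and the forward direction is complete with $\alpha_2=s$ satisfying $qs\equiv 1\pmod p$.

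For the reverse direction I would argue: given any non-zero integer $\alpha_2$ with $\alpha_2\equiv q$ or $\alpha_2 q\equiv 1\pmod p$, the Seifert manifold $M\bigl(0;(\alpha_2,p)\bigr)$ is diffeomorphic to $L(p,q)$. Rewrite $M\bigl(0;(\alpha_2,p)\bigr)=M\bigl(0;(1,0),(\alpha_2,p)\bigr)$ using (S1), and apply Theorem~\ref{thm:jane4.4} with $(\alpha_1,\beta_1)=(1,0)$, $(\alpha_2,\beta_2)=(\alpha_2,p)$. Then $p'=\begin{vmatrix}1&\alpha_2\\0&p\end{vmatrix}=p$, and choosing $(\alpha_2',\beta_2')$ with $\alpha_2\beta_2'-p\alpha_2'=1$ gives $q'=\begin{vmatrix}1&\alpha_2'\\0&\beta_2'\end{vmatrix}=\beta_2'$, where $\alpha_2\beta_2'\equiv 1\pmod p$. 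So the resulting lens space is $L(p,\beta_2')$ with $\alpha_2\beta_2'\equiv 1\pmod p$. If $\alpha_2 q\equiv 1$ then $\beta_2'\equiv q\pmod p$ and $L(p,\beta_2')=L(p,q)$; if $\alpha_2\equiv q$ then $q\beta_2'\equiv 1\pmod p$ and again $L(p,q')\cong L(p,q)$ by Reidemeister's theorem as recalled in Section~\ref{subsection:lens-surgery}. Either way $M\bigl(0;(\alpha_2,p)\bigr)\cong L(p,q)$ as oriented manifolds.

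Finally I would remark on completeness and non-redundancy: the list is exactly the fibrations with $\le 1$ singular fibre because, as in Lemma~\ref{lem:two} and the preceding discussion, a lens-space fibration over $S^2$ has at most two singular fibres, and "at most one" is precisely the case where one of the two normalised multiplicities is $1$; the condition "$\alpha_2\equiv q$ or $\alpha_2 q\equiv 1\pmod p$" is forced by the forward direction (where $\alpha_2=s$, and $s$ may be replaced by any integer congruent to it mod $p$, or we may swap the roles of the two solid tori replacing $q$ by $s$, cf.\ Section~\ref{subsection:lens-surgery}). I would not attempt to eliminate the overlap between the two congruence conditions — the statement allows either, mirroring the $q\mapsto q^{-1}$ ambiguity in classifying lens spaces.

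The main obstacle I anticipate is the bookkeeping in the forward direction: making sure that imposing "at most one singular fibre" really does let us normalise to $\alpha_1=1$ \emph{and} then to $\beta_1=0$ by moves that are legitimate given that the gluing~(\ref{eqn:lens-surgery}) is already fixed (cf.\ Remark~\ref{rem:sign2} about not being free to change signs independently) — i.e. that (S2) is harmless here whereas (S3) must be used carefully. Once the normalisation $(\alpha_1,\beta_1)=(1,0)$ is in place, everything reduces to the already-displayed special-case computation, so the rest is routine.
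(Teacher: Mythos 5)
Your argument is correct and follows essentially the paper's own route: the paper likewise reduces the ``at most one singular fibre'' case to the special case $s\alpha_1^0=\alpha_2^0$ (equivalently $\alpha_1'=0$, $\alpha_1=\pm1$), normalises via (S2) and (S1) to $M\bigl(0;(s,p)\bigr)$ with $qs\equiv 1\pmod p$, checks the converse with Theorem~\ref{thm:jane4.4}, and obtains the other congruence class by exchanging the roles of $q$ and~$s$. The only slip is your parenthetical claim that the relations give $\alpha_2=s$ ``directly'' with $\alpha_1'$ irrelevant: in fact $\alpha_2=s\alpha_1-p\alpha_1'=s-p\alpha_1'$, so a priori you only get $\alpha_2\equiv s\pmod p$ (or you first normalise $\alpha_1'=0$ by re-choosing $\lambda_1$, which shifts $s$ by a multiple of~$p$); either way $\alpha_2 q\equiv 1\pmod p$ still holds, so the conclusion is unaffected.
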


\begin{ex}
The Seifert fibrations of $S^3=L(1,0)$ with at most one
singular fibre are given by the $S^1$-actions
\[ \theta(z_1,z_2)=(\rme^{\rmi\theta}z_1,\rme^{\rmi s\theta}z_2),\]
where $s$ can be any non-zero integer.
\end{ex}

Since $\alpha_1$ and $\alpha_1'$ are coprime, we also have
$\gcd(\alpha,\alpha_1')=1$. If $s\alpha_1^0\neq\alpha_2^0$,
then from (\ref{eqn:alpha}) we have
\begin{equation}
\label{eqn:defn1}
\alpha= \frac{p}{\gcd(p,s\alpha_1^0-\alpha_2^0)}
\end{equation}
and, likewise,
\begin{equation}
\label{eqn:defn2}
\alpha_1'=\frac{s\alpha_1^0-\alpha_2^0}{\gcd(p,s\alpha_1^0-\alpha_2^0)}.
\end{equation}
Conversely, if one defines $\alpha$ and $\alpha_1'$ by these equations,
then (\ref{eqn:alpha}) holds.
\subsection{An algorithm for finding Seifert fibrations}
We now use this to determine all Seifert fibrations of a given
lens space. Notice that the right-hand sides of equations~(\ref{eqn:defn1})
and~(\ref{eqn:defn2})
make sense also if $s\alpha_1^0=\alpha_2^0$.
The following theorem says, in particular, that for a given lens space
one can always find a Seifert fibration where the coprime parts of the
multiplicities of the singular fibres can be prescribed arbitrarily.
For instance, there is a Seifert fibration
$S^3\rightarrow S^2(k_1,k_2)$ for any pair of
coprime non-zero integers $k_1,k_2$, see Section~\ref{subsection:model}.

\begin{thm}
\label{thm:algorithm}
(i) Let a lens space $L(p,q)$, $p>0$, and a pair of coprime non-zero
integers $\alpha_1^0,\alpha_2^0$
be given. Then there is a Seifert fibration
\[ L(p,q)=M\bigl(0;(\alpha_1,\beta_1),(\alpha_2,\beta_2)\bigr)
\longrightarrow S^2(|\alpha_1|,|\alpha_2|),\]
where the Seifert invariants are defined as follows.
Choose integers $r,s$ such that
\[ \begin{vmatrix}
-q & p\\
r  & s
\end{vmatrix}=-1.\]
Then, with $\alpha$ defined by equation~(\ref{eqn:defn1}), set
\begin{equation}
\label{eqn:defn3}
\alpha_1:=\alpha\alpha_1^0,\;\;\; \alpha_2:=\alpha\alpha_2^0,
\end{equation}
so that $\alpha=\gcd(\alpha_1,\alpha_2)$. Define $\alpha_1'$
by~(\ref{eqn:defn2}). Then there are integers
$\beta_1,\beta_1'$ such that
\begin{equation}
\label{eqn:defn4}
\begin{vmatrix}
\alpha_1 & \alpha_1'\\
\beta_1  & \beta_1'
\end{vmatrix}=1.
\end{equation}
Finally, define $\beta_2$ by the third equation of
(\ref{eqn:relations}), that is,
\begin{equation}
\label{eqn:defn5}
\beta_2:=-s\beta_1+p\beta_1'.
\end{equation}

(ii) The resulting Seifert fibration of $L(p,q)$ in this construction
is independent of the specific choice of $r,s,\beta_1,\beta_1'$. 

(iii) Any Seifert fibration of $L(p,q)$ can be obtained in this way.
\end{thm}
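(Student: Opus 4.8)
The plan is to verify each of the three parts in turn, with part (iii) being the real content.

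For part (i), I would check that the construction is well-defined and yields $L(p,q)$. The only point needing care is the existence of $\beta_1,\beta_1'$ satisfying \eqref{eqn:defn4}, which requires $\gcd(\alpha_1,\alpha_1')=1$; this follows because $\gcd(\alpha_1^0,\alpha_2^0)=1$ forces $\gcd(\alpha_1^0, s\alpha_1^0-\alpha_2^0)=1$, and $\gcd(\alpha,\alpha_1')=1$ by construction of $\alpha_1'$ as $(s\alpha_1^0-\alpha_2^0)/\gcd(p,s\alpha_1^0-\alpha_2^0)$ against $\alpha=p/\gcd(\cdots)$ — here one uses $\gcd(p, s\alpha_1^0-\alpha_2^0)$ cancels consistently. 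Then set $\alpha_2'=-r\alpha_1-q\alpha_1'$, $\beta_2'=r\beta_1+q\beta_1'$ as dictated by \eqref{eqn:relations}, and run the computation in the proof of Theorem~\ref{thm:jane4.4} \emph{backwards}: the gluing matrix one obtains from \eqref{eqn:seifert} between $(\mu_1,\lambda_1)$ and $(\mu_2,\lambda_2)$ is, by \eqref{eqn:relations} and \eqref{eqn:defn1}, exactly \eqref{eqn:lens-surgery}, so the glued manifold is $L(p,q)$. One should also confirm $(\alpha_2,\beta_2)$ is a coprime pair and $\alpha_2\ne 0$ (using $\alpha_2^0\ne 0$).

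For part (ii), the independence statement, I would argue as follows. Changing $(r,s)$ by a multiple of $(-q,p)$: the quantity $s\alpha_1^0-\alpha_2^0$ changes by a multiple of $p\alpha_1^0$, but since we divide by $\gcd(p,\cdot)$ and the result feeds into $\alpha_1'$ modulo $\alpha_1$, one checks $\alpha$ is literally unchanged and $\alpha_1'$ changes by a multiple of $\alpha_1$; by the Remark on Dehn twists (replacing $(\alpha_1',\beta_1')$ by $(\alpha_1'+k\alpha_1,\beta_1'+k\beta_1)$) this is an isomorphism of Seifert fibrations, and the induced change in $\beta_2$ via \eqref{eqn:defn5} is absorbed by operation (S2). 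Changing the choice of $(\beta_1,\beta_1')$ solving \eqref{eqn:defn4}: any two solutions differ by adding $(k\alpha_1, k\alpha_1')$ to $(\beta_1,\beta_1')$; tracking this through \eqref{eqn:defn5} shows $\beta_2$ changes by $k(-s\alpha_1+p\alpha_1')=-k\alpha_2$ using the first equation of \eqref{eqn:relations}, so $(\beta_1,\beta_2)$ changes by $(k\alpha_1,-k\alpha_2)$, which is operation (S2). Hence the isomorphism class is well-defined.

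Part (iii) is the main obstacle and the heart of the matter. Here I would start from an arbitrary Seifert fibration $L(p,q)=M\bigl(0;(\alpha_1,\beta_1),(\alpha_2,\beta_2)\bigr)$ (legitimate by Lemma~\ref{lem:two}, the non-orientable base being handled separately in Proposition~\ref{prop:lens-rp2}), together with a decomposition into solid tori along the preimage of a separating circle, with longitudes normalised so that the gluing is \eqref{eqn:lens-surgery}. The computation preceding the theorem then shows that the Seifert invariants satisfy exactly the relations \eqref{eqn:relations}, hence \eqref{eqn:alpha}, hence — once one knows $\gcd(\alpha,\alpha_1')=1$ — equations \eqref{eqn:defn1} and \eqref{eqn:defn2} in the case $s\alpha_1^0\ne\alpha_2^0$, while the case $s\alpha_1^0=\alpha_2^0$ was already disposed of (Proposition~\ref{prop:one}, whose fibrations one must check also arise from the algorithm with a suitable choice of $\alpha_1^0,\alpha_2^0$). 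So the given fibration has precisely the Seifert invariants produced by the algorithm with input $(\alpha_1^0,\alpha_2^0)=(\alpha_1/\alpha,\alpha_2/\alpha)$, up to the choices covered by part (ii) and the equivalences (S0)--(S3). The subtle point I expect to spend the most effort on is bookkeeping the signs: by Remark~\ref{rem:sign2} one cannot independently normalise $\alpha_1,\alpha_2$ to be positive once \eqref{eqn:lens-surgery} is fixed, so I must be careful to allow $\alpha_1^0$ or $\alpha_2^0$ negative and to check that the algorithm, as stated, genuinely reaches every sign combination — and that the output's dependence on replacing $(\alpha_1^0,\alpha_2^0)$ by $(-\alpha_1^0,-\alpha_2^0)$ (which by (S3) on both factors should give an isomorphic fibration, possibly after also flipping $s\to -s$) is consistent. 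Matching these residue/sign conditions with the diffeomorphism classification $q\equiv q'$ or $qq'\equiv 1$ of Reidemeister is the final check.
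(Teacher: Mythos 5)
Your proposal is correct and follows essentially the same route as the paper: the coprimality of $\alpha_1,\alpha_1'$ via \eqref{eqn:alpha}, the bookkeeping of the choices of $s$ and $(\beta_1,\beta_1')$ through \eqref{eqn:defn5} and operation (S2), and the reversal of the solid-torus derivation of \eqref{eqn:relations} for part (iii). The only (harmless) variation is in part (i), where you take the specific $(\alpha_2',\beta_2')$ dictated by \eqref{eqn:relations} and recover the gluing matrix \eqref{eqn:lens-surgery} directly, whereas the paper computes $\alpha_1\beta_2+\beta_1\alpha_2=p$ and then verifies $\alpha_1\beta_2'+\beta_1\alpha_2'\equiv q \pmod{p}$ for an arbitrary admissible $(\alpha_2',\beta_2')$; both yield the same conclusion.
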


\begin{proof}
(i) We first need to show that $\alpha_1$ and $\alpha_1'$ are coprime.
By the definitions (\ref{eqn:defn1}) and (\ref{eqn:defn2}) of
$\alpha$ and $\alpha_1'$, respectively, we have $\gcd(\alpha,\alpha_1')=1$.
Any divisor of both $\alpha_1$ and $\alpha_1'$ would also,
by~(\ref{eqn:alpha}), divide~$\alpha_2$, and hence $\alpha$.
This validates the definition of $\beta_1$ via (\ref{eqn:defn4}).

Next we want to verify, with the help of
Theorem~\ref{thm:jane4.4}, that $M\bigl(0;(\alpha_1,\beta_1),
(\alpha_2,\beta_2)\bigr)$ is diffeomorphic to $L(p,q)$.
Set $u:=\gcd(p,s\alpha_1^0-\alpha_2^0)$, so that $p=u\alpha$
and $s\alpha_1^0-\alpha_2^0=u\alpha_1'$. Then
\begin{eqnarray*}
\alpha_1\beta_2+\beta_1\alpha_2
  & = & p\alpha_1\beta_1'-s\alpha_1\beta_1+\beta_1\alpha_2\\
  & = & p\alpha_1\beta_1'-\beta_1(s\alpha_1-\alpha_2)\\
  & = & p\alpha_1\beta_1'-\beta_1\alpha(s\alpha_1^0-\alpha_2^0)\\
  & = & p\alpha_1\beta_1'-\beta_1\alpha u\alpha_1'\\
  & = & p(\alpha_1\beta_1'-\beta_1\alpha_1')\\
  & = & p.
\end{eqnarray*}
In particular, this shows that any divisor $d$ of both $\alpha_2$ and
$\beta_2$ also divides~$p$, and hence also $d|s\beta_1$ by~(\ref{eqn:defn5}).
But $\gcd(p,s)=1$, so that $d|\beta_1$. From $p\alpha_1'=s\alpha_1-\alpha_2$
we also have $d|\alpha_1$. Since $\gcd(\alpha_1,\beta_1)=1$, this
forces $d=\pm 1$, that is, $\gcd(\alpha_2,\beta_2)=1$. So $(\alpha_2,\beta_2)$
is indeed an allowable Seifert invariant.

Now choose integers $\alpha_2',\beta_2'$ such that
$\alpha_2\beta_2'-\beta_2\alpha_2'=1$ as in Theorem~\ref{thm:jane4.4}.
We then need to verify that $\alpha_1\beta_2'+\beta_1\alpha_2'\equiv q$
mod~$p$. To this end, it suffices to show that
$s(\alpha_1\beta_2'+\beta_1\alpha_2')\equiv 1$ mod~$p$.

Computing modulo~$p$, from (\ref{eqn:defn5}) we have $\beta_2\equiv
-s\beta_1$. The defining equation for $\alpha_2',\beta_2'$ then becomes
\[ \alpha_2\beta_2'+s\beta_1\alpha_2'\equiv 1.\]
Hence
\begin{eqnarray*}
s(\alpha_1\beta_2'+\beta_1\alpha_2')
  & \equiv & s\alpha_1\beta_2'+1-\alpha_2\beta_2'\\
  & =      & (s\alpha_1-\alpha_2)\beta_2'+1\\
  & =      & p\alpha_1'\beta_2'+1\\
  & \equiv & 1.
\end{eqnarray*}

(ii) The value of $s$ may be changed by adding $kp$, $k\in\Z$. This does
not affect $\gcd(p,s\alpha_1^0-\alpha_2^0)$, hence
$\alpha,\alpha_1,\alpha_2$ remain unchanged. The value of $\alpha_1'$
changes by $k\alpha_1$. With $\beta_1$ unchanged, we need to add
$k\beta_1$ to $\beta_1'$. In total, this leaves $\beta_2$ unchanged.

The pair $(\beta_1,\beta_1')$ may be changed by adding
$k(\alpha_1,\alpha_1')$, $k\in\Z$. This adds $k(-s\alpha_1+p\alpha_1')=
-k\alpha_2$ to $\beta_2$. By rule (S2), this gives an
isomorphic Seifert fibration.

(iii) This statement follows from the decomposition of $L(p,q)$
into two solid tori about the singular fibres, since all the
defining identities were derived from such a composition.
Exchanging the roles of the two solid tori, which amounts to
passing from $L(p,q)$ to the diffeomorphic $L(p,s)$, is the same
as exchanging the role of $\alpha_1^0$ and~$\alpha_2^0$.
\end{proof}

\begin{ex}
With $\alpha_1^0=1$ and $q\alpha_2^0\equiv 1$ mod~$p$, the
algorithm in the theorem yields the Seifert fibration
$M\bigl(0;(1,0),(\alpha_2^0,p)\bigr)$ of $L(p,q)$. With $\alpha_1^0\equiv q$
mod~$p$ and $\alpha_2^0=1$ one obtains $M\bigl(0;(\alpha_1^0,p),(1,0)\bigr)$.
These are the Seifert fibrations described in
Proposition~\ref{prop:one}.
\end{ex}

\subsection{Isomorphisms between Seifert fibrations}
We now analyse when the algorithm in Theorem~\ref{thm:algorithm}
leads to isomorphic Seifert fibrations of the lens space
$L(p,q)$. A necessary condition
is of course that the multiplicities of the singular fibres must
coincide, so we fix an unordered pair $\{|\alpha_1^0|,
|\alpha_2^0|\}$.

The main result of this section is the following.

\begin{thm}
\label{thm:equivalences}
Fix a lens space $L(p,q)$, $p>0$, and a pair of coprime natural numbers
$\{|\alpha_1^0|,|\alpha_2^0|\}$. We consider Seifert fibrations
of the form
\[ L(p,q)\longrightarrow S^2(m_1,m_2),\]
where the coprime parts
\[ \Bigl\{\frac{m_1}{\gcd(m_1,m_2)},\frac{m_2}{\gcd(m_1,m_2)}\Bigr\} \]
equal $\{|\alpha_1^0|,|\alpha_2^0|\}$.

(i) If $|\alpha_1^0|=|\alpha_2^0|=1$, then there are precisely two
distinct such Seifert fibrations of $L(p,q)$.
There is an orientation-reversing
isomorphism between these two fibrations if and only if $q^2\equiv -1$
mod~$p$.

(ii) If $|\alpha_1^0|\neq|\alpha_2^0|$, then the following holds:
\begin{enumerate}
\item[(1)] If $q^2\not\equiv\pm 1$ mod~$p$, there are exactly
four distinct such Seifert fibrations.
\item[(2)] If $q^2\equiv 1$ but $q^2\not\equiv -1$, there are exactly
two distinct such Seifert fibrations.
\item[(3)] If $q^2\equiv -1$ but $q^2\not\equiv 1$, there are exactly
four such Seifert fibrations, which are orientation-reversingly
isomorphic in two pairs.
\item[(4)] If $q^2\equiv\pm 1$ (and hence $p\in\{1,2\}$) there
are exactly two such Seifert fibrations, which are
orientation-reversingly isomorphic.
\end{enumerate}
\end{thm}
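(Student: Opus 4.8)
The plan is to reduce everything to understanding how the data $(r,s,\beta_1,\beta_1')$ and the ordering $(\alpha_1^0,\alpha_2^0)$ feed into the algorithm of Theorem~\ref{thm:algorithm}, and then to count the resulting Seifert fibrations modulo the isomorphism relation given by the moves (S0)--(S3). By Theorem~\ref{thm:algorithm}(ii), the only genuine choices are: (a) which of the two coprime numbers is called $\alpha_1^0$ and which $\alpha_2^0$ (an ordering of an unordered pair), and (b) which of the two solid tori about the singular fibres is called $V_1$ --- equivalently, by part (iii) of that theorem and the discussion in Section~\ref{subsection:lens-surgery}, whether we work with $L(p,q)$ directly or with the diffeomorphic copy $L(p,s)$, where $qs\equiv 1 \bmod p$. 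So a priori there are at most four fibrations in the list, indexed by the pair (ordering of $\{|\alpha_1^0|,|\alpha_2^0|\}$) $\times$ (choice of $q$ versus $s$). The task is to decide exactly when two of these four coincide up to (orientation-preserving, resp. orientation-reversing) isomorphism.

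First I would set up the bookkeeping. Given an ordering $(\alpha_1^0,\alpha_2^0)$ and a choice of surgery coefficient (call it $q$; the other case is symmetric by the $q\leftrightarrow s$ exchange), the algorithm produces $\alpha=p/\gcd(p,s\alpha_1^0-\alpha_2^0)$ and then a well-defined fibration $L(p,q)\to S^2(\alpha|\alpha_1^0|,\alpha|\alpha_2^0|)$. The key observation is that swapping $(\alpha_1^0,\alpha_2^0)$ while simultaneously swapping $(q,s)$ (i.e.\ using $L(p,s)$ with the reversed ordering) reproduces the \emph{same} Seifert fibration, because this is exactly the symmetry built into Theorem~\ref{thm:algorithm}(iii) --- exchanging the two solid tori. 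Hence the four candidates come in two pairs that are equal on the nose, and there are at most \emph{two} fibrations with a given ordering of multiplicities; combined with the two orderings this is the origin of the ``at most four'' in case (ii)(1). When $|\alpha_1^0|=|\alpha_2^0|=1$ the two orderings are the same, so we start from at most two fibrations, explaining (i). I would then identify precisely the two (or one) genuinely different fibrations by computing their full Seifert invariants via~(\ref{eqn:defn1})--(\ref{eqn:defn5}) for a representative choice of auxiliary data, and comparing using (S0)--(S3).

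Next comes the heart of the matter: detecting the remaining coincidences and the orientation-reversing isomorphisms, which are governed by the Reidemeister classification recalled at the end of Section~\ref{subsection:lens-surgery}: $L(p,q)\cong L(p,q')$ iff $q'\equiv q$ or $qq'\equiv 1 \bmod p$, and $L(p,q)\cong -L(p,q')$ iff $q'\equiv -q$ or $qq'\equiv -1$. Applied to our situation, the ``$q$ vs.\ $s$'' pair of fibrations (with a fixed ordering of multiplicities) are orientation-preservingly isomorphic precisely when the ordered surgery data $q$ and $s$ give the \emph{same} oriented identification of invariants --- which, since $qs\equiv 1$, happens exactly when $q\equiv s$, i.e.\ $q^2\equiv 1 \bmod p$; this collapses the two into one and produces cases (i) [no collapse in orientation-preserving sense unless $q^2\equiv1$, but then $p\le 2$], (ii)(2) and (ii)(4). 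Similarly, an orientation-\emph{reversing} isomorphism between the two members of a pair exists exactly when reversing orientation carries $q$ to $s$ up to the Reidemeister moves, i.e.\ when $q\equiv -s$ or $qs\equiv -1$, and since $qs\equiv 1$ the condition $qs\equiv-1$ forces $p\in\{1,2\}$ while $q\equiv -s$ is equivalent to $q^2\equiv -1 \bmod p$; this yields the orientation-reversing statements in (i) and (ii)(3)--(4). The remaining case, $q^2\not\equiv\pm1$, admits none of these collapses, giving exactly four in (ii)(1).

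To finish, I would treat the degenerate small cases explicitly: $q^2\equiv\pm1$ has solutions with $q^2\equiv 1$ \emph{and} $q^2\equiv-1$ simultaneously only when $2\equiv 0$, i.e.\ $p\in\{1,2\}$, which is the parenthetical remark in (ii)(4); and for $p=1$ (so $L(1,0)=S^3$) and $p=2$ (so $L(2,1)=\RP^3$) I would check the count directly against the model $S^1$-actions of Section~\ref{subsection:model} to make sure no spurious extra isomorphism appears. The main obstacle I anticipate is not conceptual but combinatorial: one must be careful that the two ``copies'' produced by the two orderings of $\{|\alpha_1^0|,|\alpha_2^0|\}$ are genuinely distinct when $|\alpha_1^0|\neq|\alpha_2^0|$ (so that the count is four and not two in the generic case) --- this requires verifying that there is no unexpected orientation-preserving isomorphism swapping the two singular fibres, which would have to come from a Reidemeister move on the surgery data, and ruling this out is exactly where the hypothesis $|\alpha_1^0|\neq|\alpha_2^0|$ must be used together with the explicit formulas~(\ref{eqn:defn1})--(\ref{eqn:defn5}). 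Keeping the orientation conventions consistent throughout (cf.\ Remarks~\ref{rem:sign1} and~\ref{rem:sign2}, where the sign of $\alpha_1$ is \emph{not} fixed) will be the other delicate point.
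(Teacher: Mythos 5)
There is a genuine gap, and it sits right at the start of your bookkeeping. You enumerate the candidate fibrations by (ordering of the unordered pair of natural numbers) $\times$ ($q$ versus $s$), and you then correctly observe that these two choices are the \emph{same} choice, by Theorem~\ref{thm:algorithm}(iii). But that means your list collapses to at most \emph{two} candidates, and you can never account for the four distinct fibrations in case (ii)(1). What you are missing is that the input to the algorithm of Theorem~\ref{thm:algorithm} is a pair of coprime non-zero \emph{integers}, so for a fixed unordered pair of absolute values there are four genuinely different inputs up to an overall sign change, namely $(\alpha_1^0,\alpha_2^0)$, $(\alpha_1^0,-\alpha_2^0)$, $(\alpha_2^0,\alpha_1^0)$ and $(\alpha_2^0,-\alpha_1^0)$. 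The relative sign flip is not a relabelling of solid tori (cf.\ Remark~\ref{rem:sign2}: once the gluing map is fixed one may not reverse $(\mu_i,\lambda_i)$ for only one $i$), and it generically changes the fibration --- indeed it can even change $\alpha=p/\gcd(p,s\alpha_1^0\mp\alpha_2^0)$ and hence the base orbifold. The paper's proof is organised precisely around comparing the fibration from $(\alpha_1^0,\alpha_2^0)$ with each of the other three inputs (cases (A), (B), (C)), and the sign-flip case (A) is where the count of four comes from.

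The second problem is that your mechanism for detecting coincidences --- invoking Reidemeister's classification to decide when ``$q$ and $s$ give the same oriented identification of invariants'' --- conflates diffeomorphism of lens spaces with isomorphism of Seifert fibrations. Since $qs\equiv 1$ we always have $L(p,q)\cong L(p,s)$; the question is whether some diffeomorphism carries one fibration to the other, and this cannot be read off from the Reidemeister conditions alone. In the paper this is settled by explicit computation: one writes down what an isomorphism would force on the Seifert invariants via (S0)--(S3) (e.g.\ $\pm\ubeta_1=\beta_2-\ell\alpha_2$, $\pm\ubeta_2=\beta_1+\ell\alpha_1$ in case (B.1)), checks via the determinant identity $p=\alpha_1\beta_2+\beta_1\alpha_2$ which sign of isomorphism is even possible, and then reduces the resulting integrality requirement on $\ubeta_1'$ to a divisibility condition such as $p\mid\beta_1(1-s^2)$, which only after a further argument (using that $u$ divides $(1-s^2)\alpha_i^0$ for both $i$ and that $\gcd(\alpha,\beta_1)=1$) becomes $q^2\equiv 1$ mod~$p$. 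None of these steps is automatic, and your proposal does not supply substitutes for them; in particular the correct final conditions ($q^2\equiv\pm1$, and $p\in\{1,2\}$ in case (A) with distinct multiplicities) emerge only from this invariant-level analysis.
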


The proof of this theorem will take up the remainder of this section.
We need to study the Seifert fibrations constructed with the
algorithm in Theorem~\ref{thm:algorithm}, starting from the
ordered pairs $(\alpha_1^0,\alpha_2^0)$, $(\alpha_1^0,-\alpha_2^0)$,
$(\alpha_2^0,\alpha_1^0)$ and $(\alpha_2^0,-\alpha_1^0)$.
Changing the sign of both $\alpha_1^0$ and $\alpha_2^0$ has no effect:
in the algorithm both the signs of the $\alpha_i$ and the $\beta_i$ become
reversed.

\vspace{2mm}

\textbf{(A)} As a first case, we consider the Seifert fibration
\[ M:=M\bigl(0;(\alpha_1,\beta_1),(\alpha_2,\beta_2)\bigr)\]
of $L(p,q)$ obtained from the pair $(\alpha_1^0,\alpha_2^0)$, and
\[ \uM:=M\bigl(0;(\ualpha_1,\ubeta_1),(\ualpha_2,\ubeta_2)\bigr),\]
corresponding to $(\alpha_1^0,-\alpha_2^0)$. Our notational
convention is that all quantities corresponding to $\uM$
will be underlined. We want to decide when $M=\uM$, by which we mean
isomorphism of Seifert fibrations, or $M=-\uM$.

The latter means that there exists an orientation-\emph{reversing}
diffeomorphism between $M$ and $\uM$ sending fibres to fibres.
Of course, this can only happen if $L(p,q)$ admits an
orientation-reversing diffeomorphism, which by
Section~\ref{subsection:lens-surgery} is equivalent to $2q\equiv 0$
(i.e.\ $p\in\{1,2\}$) or $q^2\equiv -1$ mod~$p$; the first case
is subsumed by the second one. Recall from
Section~\ref{subsection:seifert} that 
\[ -\uM=M\bigl(0;(\ualpha_1,-\ubeta_1),(\ualpha_2,-\ubeta_2)\bigr).\]

The condition $|\alpha_1|=|\ualpha_1|$, $|\alpha_2|=|\ualpha_2|$ on the
multiplicities translates into $\alpha=\ualpha$, or
\begin{equation*}
\label{eqn:u-A}
\tag{u-A}
u:=\gcd(p,s\alpha_1^0-\alpha_2^0)=\gcd(p,s\alpha_1^0+\alpha_2^0).
\end{equation*}
Thus, $\ualpha_1=\alpha_1$, $\ualpha_2=-\alpha_2$, i.e.
\[ \uM=M\bigl(0;(\alpha_1,\ubeta_1),(-\alpha_2,\ubeta_2)\bigr)
=M\bigl(0;(\alpha_1,\ubeta_1),(\alpha_2,-\ubeta_2)\bigr).\]

\vspace{1mm}

\textbf{(A.1)}
Assume that $|\alpha_1^0|\neq |\alpha_2^0|$, so that the two singular
fibres can be distinguished by their multiplicity. Then $M=\pm\uM$ is
equivalent to the existence of an $\ell\in\Z$ such that
\[ \pm\ubeta_1=\beta_1+\ell\alpha_1,\;\;\; \mp\ubeta_2=
\beta_2-\ell\alpha_2.\]
For $\uM$ to come from our algorithm, the defining equations
in Theorem~\ref{thm:algorithm} must be satisfied. Using
(\ref{eqn:defn2}) and the third
equation of (\ref{eqn:relations}), we can express
condition (\ref{eqn:defn4}) in terms of unprimed quantities as follows:
\[ p=p\begin{vmatrix}\alpha_1&\alpha_1'\\ \beta_1&\beta_1'\end{vmatrix}=
\alpha_1(\beta_2+s\beta_1)-\beta_1(s\alpha_1-\alpha_2)
=\alpha_1\beta_2+\beta_1\alpha_2.\]
For the underlined quantities we get
\begin{eqnarray*}
p\begin{vmatrix}\ualpha_1&\ualpha_1'\\ \ubeta_1&\ubeta_1'\end{vmatrix}
  & = & \alpha_1(\mp\beta_2\pm\ell\alpha_2\pm s\beta_1\pm\ell s\alpha_1)
        \mp(\beta_1+\ell\alpha_1)(s\alpha_1+\alpha_2)\\
  & = & \mp(\alpha_1\beta_2+\beta_1\alpha_2)\\
  & = & \mp p.
\end{eqnarray*}
This means that only the lower choice of sign is
possible, so at best we might have $M=-\uM$.

For this to be the case, we need only ensure
that $\ubeta_1'=(\ubeta_2+s\ubeta_1)/p$ is actually integral, that is,
we have the divisibility condition
\[p\,|\,(\ubeta_2 + s \ubeta_1)=
\bigl(\beta_2-s\beta_1-\ell(s\alpha_1+\alpha_2)\bigr). \]
Since $u$ divides $s\alpha_1^0+\alpha_2^0$, and $p=u\alpha$, this
condition reduces to
\[ p\,|\,(\beta_2-s\beta_1).\]
But of course we also have $p|(\beta_2+s\beta_1)$ from the
fibration~$M$, so $p=u\alpha$ must divide~$2\beta_1$.

With $\gcd(\alpha,\beta_1)=1$ this gives $\alpha\in\{1,2\}$.
Moreover, from the defining equation (\ref{eqn:u-A})
we have, with $\gcd(s,p)=1$, that $u$ divides both $2\alpha_1^0$
and $2\alpha_2^0$. With $\gcd(\alpha_1^0,\alpha_2^0)=1$ this
yields $u\in\{1,2\}$. Hence $p\in\{1,2,4\}$.

For $p$ equal to $1$ or~$2$, the divisibility condition is
obviously satisfied. For $p=4$, i.e.\ $u=\alpha=2$,
the integer $\beta_1$ would have to be even, and hence $\alpha_1$ odd,
contradicting $\alpha|\alpha_1$; so this is excluded.

Observe that for $p\in\{1,2\}$, condition (\ref{eqn:u-A}) is
automatically satisfied.

We now want to argue that, conversely, the condition $p\in\{1,2\}$,
which guarantees the divisibility condition, is also sufficient
for $M=-\uM$ to hold. (This part of the argument is analogous
in the cases below, and will not be repeated there.) Indeed,
if we \emph{define} $\ubeta_1=-\beta_1-\ell\alpha_1$ and
$\ubeta_2=\beta_2-\ell\alpha_2$ for some $\ell\in\Z$, the divisibility
condition allows us to define $\ubeta_1'$ by
$\ubeta_1'=(\ubeta_2+s\ubeta_1)/p$. Also, we define
$\ualpha_1'$ by the underlined version of (\ref{eqn:defn2}),
that is, $\ualpha_1'=(s\alpha_1^0+\alpha_2^0)/u$. The computations above
then show that all the defining equations in the algorithm
of Theorem~\ref{thm:algorithm} are satisfied. Since $\ubeta_1$
is determined in this algorithm up to adding integer multiples of
$\ualpha_1$, this shows that any possible choice in the
construction of $\uM$ arises in the way just described.

\vspace{1mm}

\textbf{(A.2)}
If $|\alpha_1^0|=|\alpha_2^0|$ (and hence equal to~$1$), write
$\alpha_2=\varepsilon\alpha_1$ with $\varepsilon\in\{\pm 1\}$.
In addition to the options
for $M=\pm\uM$ discussed under \textbf{(A.1)}, we have the
freedom to exchange the roles of the two singular fibres.
This translates into
\[ \pm\varepsilon\ubeta_1=\beta_2+\ell\alpha_2,\;\;\;
\mp\varepsilon\ubeta_2=\beta_1-\ell\alpha_1.\]
We compute
\begin{eqnarray*}
p\begin{vmatrix}
\ualpha_1&\ualpha_1'\\
\ubeta_1 &\ubeta_1'
\end{vmatrix}
  & = & \alpha_1(\ubeta_2+s\ubeta_1)-\ubeta_1(s\alpha_1+\alpha_2)\\
  & = & \varepsilon\alpha_1(\mp\beta_1\pm\ell\alpha_1\pm s\beta_2
        \pm\ell s\alpha_2)
        \mp\varepsilon(\beta_2+\ell\alpha_2)(s\alpha_1+\alpha_2)\\
  & = & \mp(\alpha_2\beta_1+\beta_2\alpha_1)\\
  & = & \mp p.
\end{eqnarray*}
Once again, by~(\ref{eqn:defn4}), only $M=-\uM$ is possible.

The divisibility condition now becomes
\[ p\,|\,(\ubeta_2+s\ubeta_1)=\bigl(\varepsilon(\beta_1-s\beta_2)-
\ell(s\alpha_1+\alpha_2)\bigr),\]
which reduces to
$p|(\beta_1-s\beta_2)$.
With the third equation from (\ref{eqn:relations}) this is equivalent to
\[ p\,|\,\beta_1(1+s^2).\]
With $p=u\alpha$ and $\gcd(\alpha,\beta_1)=1$ this implies $\alpha|(1+s^2)$.

Now $u\in\{1,2\}$ as in \textbf{(A.1)}. For $u=1$
the divisibility condition is equivalent
to $p|(1+s^2)$. Having $u=2$ means
\[ 2=\gcd(p,s-1)=\gcd(p,s+1)\]
by (\ref{eqn:u-A}). But then one of $s-1$ or $s+1$ is divisible
by~$4$, which means that $p$ is not. So $\alpha$ must be odd.
Then the divisibility condition is again equivalent to
$p=2\alpha$ being a divisor of $1+s^2$.

Notice that the condition $s^2\equiv-1$ mod~$p$ is equivalent to
$q^2\equiv-1$. Also, this condition is automatically satisfied for
$p\in\{1,2\}$, so we need not list these as separate options coming
from~\textbf{(A.1)}.

Observe that (\ref{eqn:u-A}) is again
a consequence of $s^2\equiv-1$, since
\begin{eqnarray*}
\gcd(p,(s-\varepsilon)\alpha_1^0)
  & = & \gcd(p,s(s-\varepsilon)\alpha_1^0)\\
  & = & \gcd(p,-(1+s\varepsilon)\alpha_1^0)\\
  & = & \gcd(p,(s+\varepsilon)\alpha_1^0).
\end{eqnarray*}

\vspace{1mm}

We summarise case \textbf{(A)} in the following proposition.

\begin{prop}
\label{prop:A}
When the pair $(\alpha_1^0,\alpha_2^0)$ is replaced by
$(\alpha_1^0,-\alpha_2^0)$ in the algorithm of Theorem~\ref{thm:algorithm},
the two resulting Seifert fibrations on $L(p,q)$ are never isomorphic as
oriented fibrations.
An orientation-reversing isomorphism
exists, for $|\alpha_1^0|\neq|\alpha_2^0|$, precisely when
$p\in\{1,2\}$; for $|\alpha_1^0|=|\alpha_2^0|=1$, if and only if
$q^2\equiv-1$ mod~$p$.
\qed
\end{prop}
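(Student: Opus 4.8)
The plan is to assemble the statement from the case analysis carried out in \textbf{(A.1)} and \textbf{(A.2)} above. First I would note that an isomorphism between $M$ and $\uM$, whether orientation-preserving or reversing, can exist only if the multiplicities of the singular fibres agree; this is precisely condition~(\ref{eqn:u-A}), under which $\uM=M\bigl(0;(\alpha_1,\ubeta_1),(\alpha_2,-\ubeta_2)\bigr)$. One then splits according to whether the two singular fibres can be distinguished by their multiplicity, the difference being that for $|\alpha_1^0|=|\alpha_2^0|=1$ one is additionally allowed to interchange them; in either case $M=\pm\uM$ corresponds to matching the Seifert invariants modulo the moves (S0)--(S3) and the Dehn-twist ambiguity $\beta_i\mapsto\beta_i+\ell\alpha_i$, $\ell\in\Z$.

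The decisive step is to rewrite the defining relation~(\ref{eqn:defn4}) for the underlined fibration in terms of the unprimed data via~(\ref{eqn:defn2}) and the third equation of~(\ref{eqn:relations}); one finds that $p\begin{vmatrix}\ualpha_1&\ualpha_1'\\ \ubeta_1&\ubeta_1'\end{vmatrix}$ comes out as $\mp p$. As this determinant must equal $1$ and $p>0$, only the lower sign is admissible, so an orientation-preserving isomorphism is impossible (the first assertion) and at best $M=-\uM$. It then remains to decide when $\ubeta_1'=(\ubeta_2+s\ubeta_1)/p$ is an integer. For $|\alpha_1^0|\neq|\alpha_2^0|$ this divisibility, together with $p\mid\beta_2+s\beta_1$ already holding for $M$, forces $p\mid 2\beta_1$, whence $\alpha\in\{1,2\}$ and $u\in\{1,2\}$, so $p\in\{1,2,4\}$ with $p=4$ ruled out on parity grounds. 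For $|\alpha_1^0|=|\alpha_2^0|=1$ the analogous computation, now also allowing the fibre-swap matchings and using once more the third equation of~(\ref{eqn:relations}), turns the divisibility into $p\mid\beta_1(1+s^2)$, hence $p\mid 1+s^2$, which is equivalent to $q^2\equiv-1$ mod~$p$ since $qs\equiv1$ mod~$p$; the $p\in\{1,2\}$ possibilities coming from the first case need not be recorded separately, because $q^2\equiv-1$ mod~$p$ then holds automatically. Conversely, once the relevant condition on $p$ (resp.\ on $q$) is granted one \emph{defines} the underlined invariants by the matching formulas and $\ubeta_1'$, $\ualpha_1'$ by the corresponding defining equations; the computations above show that all identities of the algorithm in Theorem~\ref{thm:algorithm} hold, and since $\ubeta_1$ is determined only up to multiples of $\ualpha_1$ every admissible $\uM$ arises this way, so $-\uM$ genuinely comes from $(\alpha_1^0,-\alpha_2^0)$.

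The only real obstacle is bookkeeping: keeping the two sign choices and the Dehn-twist parameter $\ell$ consistent through the determinant computation, and carrying out the gcd reductions (using $\gcd(\alpha,\beta_1)=1$, $\gcd(\alpha_1^0,\alpha_2^0)=1$, $\gcd(p,s)=1$) without losing or gaining a factor of~$2$ --- this is exactly what pins $p$ down to $\{1,2\}$ in the first case and to the divisors of $1+s^2$ in the second.
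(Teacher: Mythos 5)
Your proposal follows the paper's own argument (cases \textbf{(A.1)} and \textbf{(A.2)}) essentially verbatim: the same condition (\ref{eqn:u-A}) on the multiplicities, the same determinant computation ruling out an orientation-preserving isomorphism, the same divisibility analysis pinning down $p\in\{1,2\}$ resp.\ $q^2\equiv-1$ mod~$p$, and the same converse step of defining the underlined invariants. The only step you gloss is the passage from $p\mid\beta_1(1+s^2)$ to $p\mid(1+s^2)$: since $\gcd(\alpha,\beta_1)=1$ only gives $\alpha\mid(1+s^2)$ directly, the paper needs the short parity argument separating $u=1$ from $u=2$ (where $s$ is forced to be odd and $\alpha$ odd); you flag this as bookkeeping, and it does go through.
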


\vspace{2mm}

\textbf{(B)} Again we write $M$ for the Seifert fibration
corresponding to the pair $(\alpha_1^0,\alpha_2^0)$;
the Seifert fibration $\uM$
is now taken to be the one
coming from $(\alpha_2^0,\alpha_1^0)$. As before, for $M$ and $\uM$
(or $-\uM$) to be isomorphic we need $\alpha=\ualpha$, which translates into
\begin{equation*}
\label{eqn:u-B}
\tag{u-B}
u:=\gcd(p,s\alpha_1^0-\alpha_2^0)=\gcd(p,s\alpha_2^0-\alpha_1^0).
\end{equation*}
We can then write
\[ \uM=M\bigl(0;(\alpha_2,\ubeta_1),(\alpha_1,\ubeta_2)\bigr).\]

\vspace{1mm}

\textbf{(B.1)} For $|\alpha_1^0|\neq|\alpha_2^0|$, the statement
$M=\pm\uM$ is equivalent to the existence of an $\ell\in\Z$ such that
\[ \pm\ubeta_1=\beta_2-\ell\alpha_2,\;\;\;
\pm\ubeta_2=\beta_1+\ell\alpha_1.\]
From
\begin{eqnarray*}
p\begin{vmatrix}\ualpha_1&\ualpha_1'\\ \ubeta_1&\ubeta_1'\end{vmatrix}
  & = & \alpha_2(\pm\beta_1\pm\ell\alpha_1\pm s\beta_2\mp\ell s\alpha_2)
        \mp(\beta_2-\ell\alpha_2)(s\alpha_2-\alpha_1)\\
  & = & \pm (\alpha_1\beta_2+\beta_1\alpha_2)\\
  & = & \pm p
\end{eqnarray*}
we see that only $M=\uM$ is an option.

The divisibility condition
\[ p\,|\, (\ubeta_2+s\ubeta_1)=\bigl((\beta_1+s\beta_2)-\ell
(s\alpha_2-\alpha_1)\bigr)\]
is equivalent to $p|(\beta_1+s\beta_2)$; with the third equation
of (\ref{eqn:relations}) this becomes
\[ p\,|\,\beta_1(1-s^2).\]

From (\ref{eqn:u-B}) we see that $u$ divides both
\[ s\alpha_2^0-\alpha_1^0+s(s\alpha_1^0-\alpha_2^0)=
-(1-s^2)\alpha_1^0\]
and
\[ s\alpha_1^0-\alpha_2^0+s(s\alpha_2^0-\alpha_1^0)=
-(1-s^2)\alpha_2^0.\]
Since $\alpha_1^0$ and $\alpha_2^0$ are coprime, this means $u|(1-s^2)$.
From
\[ p=u\alpha\,|\,\beta_1(1-s^2),\;\;\; u\,|\,(1-s^2),\;\;\;\text{and}\;\;\;
\gcd(\alpha,\beta_1)=1\]
we find $p|(1-s^2)$, so the divisibility condition is $q^2\equiv 1$
mod~$p$.

The requirement (\ref{eqn:u-B}) is implied by this condition,
since
\[ s(s\alpha_1^0-\alpha_2^0)\equiv-(s\alpha_2^0-\alpha_1^0)\;\text{mod}\;p.\]

\vspace{1mm}

\textbf{(B.2)} If $|\alpha_1^0|=|\alpha_2^0|=1$, and with
$\alpha_2=\varepsilon\alpha_1$, $\varepsilon\in\{\pm 1\}$, we have
the additional possibility that
\[ \pm\varepsilon\ubeta_1=\beta_1+\ell\alpha_1,\;\;\;
\pm\varepsilon\ubeta_2=\beta_2-\ell\alpha_2.\]
Again one checks that only $M=\uM$ might happen. The divisibility
condition now becomes $p|(\beta_2+s\beta_1)$, which is always satisfied
by~(\ref{eqn:relations}).

This should not come as a surprise: reversing the roles of the two
singular fibres simply cancels the effect of exchanging the
two multiplicities (of equal absolute value).

Also, condition (\ref{eqn:u-B}) is again empty, since
\[ s(s-\varepsilon)\equiv -(s\varepsilon-1)\;\text{mod}\; p.\]

\vspace{1mm}

Summarising, we have the following statement.

\begin{prop}
\label{prop:B}
The two Seifert fibrations coming from $(\alpha_1^0,\alpha_2^0)$ and
$(\alpha_2^0,\alpha_1^0)$ are always isomorphic (as oriented fibrations)
for $|\alpha_1^0|=|\alpha_2^0|=1$; for $|\alpha_1^0|\neq|\alpha_2^0|$
they are isomorphic precisely when $q^2\equiv 1$ mod~$p$. There
is never an orientation-reversing isomorphism between the
two Seifert fibrations.
\qed
\end{prop}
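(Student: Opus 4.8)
The plan is to read the proposition off the case analysis just carried out in~\textbf{(B.1)} and~\textbf{(B.2)}, organising their conclusions into the three assertions of the statement. The common preliminary step is to observe that, for $M$ and $\uM$ (or $-\uM$) to be isomorphic, the multiplicities of the singular fibres must agree, which forces the gcd identity~(\ref{eqn:u-B}); once this holds one has $\ualpha_1=\alpha_2$ and $\ualpha_2=\alpha_1$, so $\uM=M\bigl(0;(\alpha_2,\ubeta_1),(\alpha_1,\ubeta_2)\bigr)$, and the question becomes whether the $\beta$-coordinates can be matched by the moves (S0)--(S3).

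When $|\alpha_1^0|\neq|\alpha_2^0|$ the two singular fibres are distinguished by their multiplicities, so $M=\pm\uM$ amounts to the existence of $\ell\in\Z$ with $\pm\ubeta_1=\beta_2-\ell\alpha_2$ and $\pm\ubeta_2=\beta_1+\ell\alpha_1$. The decisive step is to substitute these, together with~(\ref{eqn:defn2}) and the third equation of~(\ref{eqn:relations}), into the determinant condition~(\ref{eqn:defn4}) for the underlined data; this produces $p\begin{vmatrix}\ualpha_1&\ualpha_1'\\ \ubeta_1&\ubeta_1'\end{vmatrix}=\pm(\alpha_1\beta_2+\beta_1\alpha_2)=\pm p$, and since the left side equals $p$, only the upper sign is admissible. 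Hence there is no orientation-reversing isomorphism, and $M=\uM$ is the sole possibility. It then remains to ensure that $\ubeta_1'=(\ubeta_2+s\ubeta_1)/p$ is an integer; this divisibility reduces to $p\mid\beta_1(1-s^2)$, and combining $u\mid(1-s^2)$ --- itself extracted from~(\ref{eqn:u-B}) --- with $\gcd(\alpha,\beta_1)=1$ and $p=u\alpha$ yields $p\mid(1-s^2)$, that is, $q^2\equiv 1$ mod~$p$. For the converse I would argue exactly as in the converse part at the end of~\textbf{(A.1)}: assuming $q^2\equiv 1$, both~(\ref{eqn:u-B}) and the divisibility hold, so one may \emph{define} $\ubeta_1,\ubeta_2$ and then $\ubeta_1',\ualpha_1'$ by the displayed formulas and verify that all equations of the algorithm in Theorem~\ref{thm:algorithm} are satisfied, so that every choice producing $\uM$ is obtained from $M$ in the stated manner.

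In the remaining case $|\alpha_1^0|=|\alpha_2^0|=1$ there is the extra option, recorded in~\textbf{(B.2)}, of matching the two singular fibres across the swap, namely $\pm\varepsilon\ubeta_1=\beta_1+\ell\alpha_1$, $\pm\varepsilon\ubeta_2=\beta_2-\ell\alpha_2$ with $\alpha_2=\varepsilon\alpha_1$; the same determinant computation again discards the lower sign, and now the residual divisibility condition collapses to $p\mid(\beta_2+s\beta_1)$, which is automatic by~(\ref{eqn:relations}). So in this case $M=\uM$ holds unconditionally and $M=-\uM$ never does --- which is expected, since swapping the fibre roles cancels the exchange of the two equal multiplicities. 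Assembling these observations gives all three parts of the proposition.

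The step I expect to be the main obstacle is the sign bookkeeping around the determinant computation: one must translate the defining equations~(\ref{eqn:defn4})--(\ref{eqn:defn5}) for the underlined fibration precisely into constraints on $\ubeta_1$ and $\ubeta_2$, and then expand $p\begin{vmatrix}\ualpha_1&\ualpha_1'\\ \ubeta_1&\ubeta_1'\end{vmatrix}$ carefully enough to see that it equals $\pm p$ with only $+p$ possible --- this single computation simultaneously excludes orientation-reversing isomorphisms and isolates the condition $q^2\equiv 1$. Everything past that point is elementary gcd arithmetic together with the now-routine converse construction from the algorithm.
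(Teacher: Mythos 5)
Your proposal is correct and follows essentially the same route as the paper: Proposition~\ref{prop:B} is proved there precisely by the case analysis in \textbf{(B.1)} and \textbf{(B.2)}, with the same determinant computation ruling out the orientation-reversing sign, the same reduction of the divisibility condition to $p\mid\beta_1(1-s^2)$ and thence (via $u\mid(1-s^2)$ from~(\ref{eqn:u-B}) and $\gcd(\alpha,\beta_1)=1$) to $q^2\equiv 1$ mod~$p$, and the same observation that the extra fibre-swapping option in the equal-multiplicity case makes the condition automatic. The appeal to the converse argument modelled on the end of \textbf{(A.1)} is exactly what the paper intends as well.
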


\vspace{2mm}

\textbf{(C)} Finally, we need to check potential isomorphisms coming
from replacing $(\alpha_1^0,\alpha_2^0)$ by $(\alpha_2^0,-\alpha_1^0)$.
The necessary condition on multiplicities now becomes
\begin{equation*}
\label{eqn:u-C}
\tag{u-C}
u:=\gcd(p,s\alpha_1^0-\alpha_2^0)=\gcd(p,s\alpha_2^0+\alpha_1^0),\]
and we can write
\[ \uM=M\bigl(0;(\alpha_2,\ubeta_1),(-\alpha_1,\ubeta_2)\bigr).\]

\vspace{1mm}

\textbf{(C.1)} If $|\alpha_1^0|\neq|\alpha_2^0|$, then $M=\pm\uM$
holds precisely when there is an $\ell\in\Z$ such that
\[ \pm\ubeta_1=\beta_2+\ell\alpha_2,\;\;\; \mp\ubeta_2=\beta_1-\ell\alpha_1.\]
Computing as in the other cases, one finds that only $M=-\uM$ is
possible. By an argument as in \textbf{(B.1)} one sees
that the divisibility condition becomes $q^2\equiv -1$ mod~$p$.
This condition implies (\ref{eqn:u-C}), for
\[ s(s\alpha_1^0-\alpha_2^0)\equiv-(s\alpha_2^0+\alpha_1^0)\;
\text{mod}\; p.\]

\vspace{1mm}

\textbf{(C.2)} If $|\alpha_1^0|=|\alpha_2^0|=1$, then in notation
as earlier we have the additional possibility that
\[ \pm\varepsilon\ubeta_1=\beta_1+\ell\alpha_1,\;\;\;
\mp\varepsilon\ubeta_2=\beta_2-\ell\alpha_2.\]
Again, only $M=-\uM$ might happen. The divisibility condition,
with arguments as in \textbf{(A.1)}, becomes $p\in\{1,2\}$,
which is included in the condition $q^2\equiv -1$ mod~$p$ from \textbf{(C.1)}.
Alternatively, observe that this case \textbf{(C.2)} actually
coincides with~\textbf{(A.2)}.

\vspace{1mm}

In conclusion, we have the following.

\begin{prop}
\label{prop:C}
The two Seifert fibrations coming from $(\alpha_1^0,\alpha_2^0)$
and $(\alpha_2^0,-\alpha_1^0)$ are never isomorphic as
oriented fibrations. An orientation-reversing isomorphism exists
if and only if $q^2\equiv-1$ mod~$p$.
\qed
\end{prop}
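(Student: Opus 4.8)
The plan is to rerun the comparison machinery of cases~\textbf{(A)} and~\textbf{(B)} on the pair $M$ (coming from $(\alpha_1^0,\alpha_2^0)$) and $\uM$ (coming from $(\alpha_2^0,-\alpha_1^0)$), and then to assemble Proposition~\ref{prop:C} from the two sub-cases. First I would record the condition under which the multiplicities can match at all: $\{|\ualpha_1|,|\ualpha_2|\}=\{|\alpha_1|,|\alpha_2|\}$ is equivalent to $\ualpha=\alpha$, i.e.\ to the numerical identity~(\ref{eqn:u-C}), and then necessarily $\ualpha_1=\alpha_2$, $\ualpha_2=-\alpha_1$, so that $\uM=M\bigl(0;(\alpha_2,\ubeta_1),(-\alpha_1,\ubeta_2)\bigr)$.

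Next I would treat the generic sub-case $|\alpha_1^0|\neq|\alpha_2^0|$. Here the two singular fibres are distinguished by their multiplicities, so an isomorphism $M=\pm\uM$ forces a unique pairing of fibres, i.e.\ the existence of $\ell\in\Z$ with $\pm\ubeta_1=\beta_2+\ell\alpha_2$ and $\mp\ubeta_2=\beta_1-\ell\alpha_1$ (together with the complementary (S2)-shift). The decisive move is to substitute these into the defining determinant~(\ref{eqn:defn4}) for the underlined data, after eliminating $\ualpha_1'$ by means of~(\ref{eqn:defn2}) and the third line of~(\ref{eqn:relations}); done carefully, the determinant collapses to $\mp p$, and since~(\ref{eqn:defn4}) demands the value $p$ only the lower sign survives. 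This already proves that $M$ and $\uM$ are never isomorphic as oriented fibrations. With the sign fixed, the only remaining obstruction is the integrality of $\ubeta_1'=(\ubeta_2+s\ubeta_1)/p$; expanding and using~(\ref{eqn:u-C}) to absorb the $\ell$-term reduces this (exactly as the analogous step in~\textbf{(B.1)}) to $p\,|\,(\beta_1-s\beta_2)$, which via the third equation of~(\ref{eqn:relations}) becomes $p\,|\,\beta_1(1+s^2)$. Because~(\ref{eqn:u-C}) forces $u\,|\,(1+s^2)$, together with $p=u\alpha$ and $\gcd(\alpha,\beta_1)=1$ this is equivalent to $p\,|\,(1+s^2)$, i.e.\ $s^2\equiv-1$, i.e.\ $q^2\equiv-1$ mod~$p$ (using $qs\equiv1$); conversely $q^2\equiv-1$ implies~(\ref{eqn:u-C}) since $s(s\alpha_1^0-\alpha_2^0)\equiv-(s\alpha_2^0+\alpha_1^0)$ mod~$p$. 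For the converse implication I would then, just as in~\textbf{(A.1)}, \emph{define} $\ubeta_1,\ubeta_2$ by the shift formulas, set $\ubeta_1'$ equal to the now-integral quotient and $\ualpha_1'$ by the underlined version of~(\ref{eqn:defn2}), and check that all the identities of Theorem~\ref{thm:algorithm} are satisfied, so that every admissible choice in building $\uM$ is of this form.

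For the remaining sub-case $|\alpha_1^0|=|\alpha_2^0|=1$ one must additionally allow exchanging the two equal-multiplicity fibres. One checks directly, by a computation structurally identical to~\textbf{(A.1)}, that this extra possibility too only yields $M=-\uM$, now under the condition $p\in\{1,2\}$, which is already contained in $q^2\equiv-1$ mod~$p$. More economically, after the (irrelevant) global sign change the input $(\alpha_2^0,-\alpha_1^0)$ is of the shape treated in case~\textbf{(A)}, so this sub-case literally coincides with~\textbf{(A.2)} and Proposition~\ref{prop:A} delivers the conclusion directly: no orientation-preserving isomorphism, and an orientation-reversing one precisely when $q^2\equiv-1$ mod~$p$.

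Combining the two sub-cases yields Proposition~\ref{prop:C}. The genuinely delicate point is the sign bookkeeping in the determinant computation of the first sub-case: one must carry the substitutions through~(\ref{eqn:defn2}) and~(\ref{eqn:relations}) consistently enough that the determinant comes out as $\mp p$ rather than $\pm p$, which is exactly what rules out an orientation-preserving isomorphism. Everything downstream is a routine divisibility argument modulo~$p$, identical in structure to cases~\textbf{(A)} and~\textbf{(B)}.
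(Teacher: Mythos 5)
Your proposal is correct and follows essentially the same route as the paper: the same multiplicity condition (\ref{eqn:u-C}), the same shift equations, the determinant computation yielding $\mp p$ to exclude an orientation-preserving isomorphism, the reduction of the integrality of $\ubeta_1'$ to $p\,|\,\beta_1(1+s^2)$ and hence to $q^2\equiv-1$ mod~$p$, and the observation that the equal-multiplicity sub-case reduces to case \textbf{(A.2)}. The paper merely compresses these steps by referring back to the computations in \textbf{(A.1)} and \textbf{(B.1)}, which you have written out explicitly and correctly.
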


Write symbolically (e) for the Seifert fibration obtained
from the original choice $(\alpha_1^0,\alpha_2^0)$, and
(a), (b), (c) for the fibrations obtained via the
alternative choices described above. It suffices to understand
the potential isomorphisms under these three choices when we
observe that (b) is obtained from (a) via the process described
in~\textbf{(C)}, (c) from (a) via~\textbf{(B)}, and (c) from (b)
via~\textbf{(A)}. Thus, if $|\alpha_1^0|=|\alpha_2^0|$=1, we have
\[ (e)=(b),\;\;\text{distinct from}\;\; (a)=(c)
\;\;\text{if}\;\; q^2\not\equiv-1\;\text{mod}\; p,\]
and
\[ (e)=-(a)=(b)=-(c)\;\;\text{if}\;\; q^2\equiv-1\;\text{mod}\; p.\]

If $|\alpha_1^0|\neq|\alpha_2^0|$, we find, with the cases numbered
as in Theorem~\ref{thm:equivalences}~(ii),
\begin{enumerate}
\item[(1)] (e), (a), (b), (c) distinct.
\item[(2)] $(e)=(b)$, distinct from $(a)=(c)$.
\item[(3)] $(e)=-(c)$, distinct from $(a)=-(b)$.
\item[(4)] $(e)=-(a)=(b)=-(c)$.
\end{enumerate}
This concludes the proof of Theorem~\ref{thm:equivalences}.

The numbering in the following examples corresponds to the
numbering in the theorem. Whenever we write $M_1=^*M_2$ in these
lists of examples, we mean to say that $M_1$ and $M_2$ are
distinct Seifert fibrations of the lens space in question, but
$M_1$ is orientation-reversingly isomorphic to~$M_2$
(that is, $M_1=-M_2$).

\begin{ex}
(i) The Seifert fibrations of $L(3,2)$ with $\{|\alpha_1^0|,|\alpha_2^0|\}
=\{1,1\}$ are
\[ M\bigl(0;(3,-1),(3,2)\bigr)\;\;\;\text{and}\;\;\;
M\bigl(0;(1,-3)\bigr).\]

The Seifert fibrations of $L(5,2)$ with $\{|\alpha_1^0|,|\alpha_2^0|\}
=\{1,1\}$ are
\[ M\bigl(0;(5,4),(5,-3)\bigr)=^*M\bigl(0;(5,-4),(5,3)\bigr).\]

(ii) (1) The Seifert fibrations of $L(7,2)$ with $\{|\alpha_1^0|,|\alpha_2^0|\}
=\{5,2\}$ are
\[ M\bigl(0;(35,-2),(14,1)\bigr),\;\;\; M\bigl(0;(35,-8),(14,3)\bigr),\]
\[ M\bigl(0;(35,-22),(14,9)\bigr),\;\;\; M\bigl(0;(35,-3),(14,1)\bigr).\]

(2) The Seifert fibrations of $L(3,2)$ with $\{|\alpha_1^0|,|\alpha_2^0|\}
=\{5,3\}$ are
\[ M\bigl(0;(15,2),(9,-1)\bigr)\;\;\;\text{and}\;\;\;
M\bigl(0;(15,-7),(9,4)\bigr).\]

(3) The Seifert fibrations of $L(5,2)$ with $\{|\alpha_1^0|,|\alpha_2^0|\}
=\{3,2\}$ are
\[ M\bigl(0;(15,2),(10,-1)\bigr)=^*M\bigl(0;(15,-2),(10,1)\bigr),\]
\[ M\bigl(0;(15,4),(10,-3)\bigr)=^*M\bigl(0;(15,-4),(10,3)\bigr).\]

(4) The Seifert fibrations of $L(2,1)$ with $\{|\alpha_1^0|,|\alpha_2^0|\}
=\{ 5,3\}$ are
\[ M\bigl(0;(5,-1),(3,1)\bigr)=^*M\bigl(0;(5,1),(3,-1)\bigr).\]
\end{ex}

\begin{rem}
The examples for (ii) were chosen such that the base orbifold is always
the same, even if the fibrations are not isomorphic. If one
of the three conditions (\ref{eqn:u-A}), (\ref{eqn:u-B}), (\ref{eqn:u-C})
is violated, which may happen in cases (i) and (ii)~(1)--(3)
of Theorem~\ref{thm:equivalences}, 
the base orbifolds will be different. The Seifert fibrations
of $L(3,2)$ in (i) are an instance of that; for an example
in (ii) take $L(7,2)$ and $(\alpha_1^0,\alpha_2^0)=(5,3)$.
\end{rem}

\begin{ex}
In \cite[Lemma~3.4]{lang16}, the Seifert fibrations of
$L(p,1)$ with two singular fibres of equal multiplicity were
determined by an \emph{ad hoc} argument. The case $p=2$
was relevant in~\cite{fls16}. Here we show how this
classification fits into our general scheme.

A complete list of Seifert fibrations with singular fibres of
equal multiplicity, up to orientation-preserving
bundle isomorphism, comes from $\alpha_1^0=1$ and $\alpha_2^0=\pm 1$.
We refer to the two choices of $\alpha_2^0$ as the cases $(\pm)$.
We may choose $s=1$ (and $r=0$). Then
\[ \alpha=\frac{p}{\gcd(p,1\mp 1)}=
\begin{cases}
1   & \text{in case $(+)$},\\
p   & \text{in case $(-)$ and $p$ odd},\\
p/2 & \text{in case $(-)$ and $p$ even.}
\end{cases}\]
In the sequel, we always keep the order of these three cases. Then
\[ (\alpha_1,\alpha_2)=
\begin{cases}
(1,1), &\\
(p,-p), &\\
(p/2,-p/2). &
\end{cases}\]
Next, we have
\[ \alpha_1'=\frac{1\mp 1}{\gcd(p,1\mp 1)}=
\begin{cases}
0,&\\
2,&\\
1.&
\end{cases}\]
This allows us to choose
\[ (\beta_1,\beta_1')=
\begin{cases}
(0,1),&\\
((p-1)/2,1),&\\
(-1,0).&
\end{cases}\]
Then
\[ \beta_2=-\beta_1+p\beta_1'=
\begin{cases}
p,&\\
(1-p)/2+p=(1+p)/2,&\\
1.&
\end{cases}\]
Thus, the Seifert fibrations of $L(p,1)$ with two singular fibres
of equal multiplicity are
\[ M\bigl(0;(1,0),(1,p)\bigr)=M\bigl(0;(1,p)\bigr)\]
and, for $p$ odd,
\[ M\Bigl(0;\Bigl(p,\frac{p-1}{2}\Bigr),\Bigl(-p,\frac{1+p}{2}\Bigr)\Bigr)=
-M\Bigl(0;\Bigl(p,\frac{1-p}{2}\Bigr),\Bigl(p,\frac{1+p}{2}\Bigr)\Bigr);\]
for $p$ even,
\[ M=\Bigl(0;\Bigl(\frac{p}{2},-1\Bigr),\Bigl(-\frac{p}{2},1\Bigr)\Bigr)=
-M\Bigl(0;\Bigl(\frac{p}{2},1\Bigr),\Bigl(\frac{p}{2},1\Bigr)\Bigr).\]
\end{ex}
\section{The model fibrations}
\label{section:model}
We return to the model fibrations introduced in
Section~\ref{subsection:model}. We determine the Seifert invariants of these
fibrations by placing them in the context of the algorithm in
Theorem~\ref{thm:algorithm}. In particular, the discussion in this section
will show the following theorem.

\begin{thm}
\label{thm:model}
Every Seifert fibration, over an orientable base,
of a lens space $L(p,q)$, $p>0$,
is isomorphic to one of the model fibrations.
\end{thm}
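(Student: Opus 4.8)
The plan is to identify, for each model fibration
\[
\pi_{k_1,k_2}\co L(p,q)=S^3/\Z_p\longrightarrow S^2,
\]
coming from the $S^1$-action $\theta(z_1,z_2)=(\rme^{\rmi k_1\theta}z_1,\rme^{\rmi k_2\theta}z_2)$ with $\gcd(k_1,k_2)=1$, a compatible decomposition of $L(p,q)$ into two solid tori, and then to read off Seifert invariants of the form produced by the algorithm in Theorem~\ref{thm:algorithm}. First I would take the circle $C\subset S^2$ separating the two images of the coordinate circles $S^1\times\{0\}$ and $\{0\}\times S^1$, so that $\pi_{k_1,k_2}^{-1}(C)$ is an invariant torus cutting $L(p,q)$ into solid tori $V_1,V_2$ whose cores are the singular fibres of multiplicities $|k_1|$ and $|k_2|$. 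The equivariant geometry of $S^3\subset\C^2$ (coordinate tori, meridians of the coordinate solid tori, the $S^1$-orbits) gives explicit curves $q_1,q_2,h$ on $\pi_{k_1,k_2}^{-1}(C)$ exactly as in Section~\ref{section:seifert}, and the $\Z_p$-quotient map together with the surgery description (\ref{eqn:lens-surgery}) lets me express these in terms of meridians and longitudes $\mu_i,\lambda_i$ of $V_i$.

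The key computation is to show that the multiplicity $|k_i|$ equals $|\alpha_i|=\alpha|\alpha_i^0|$ where $\alpha$ is the number defined in (\ref{eqn:defn1}) for a suitable choice of coprime integers $\alpha_1^0,\alpha_2^0$. Concretely, I expect that the local model near the $i$-th coordinate circle in $S^3$ has the singular fibre of multiplicity $|k_i|$ upstairs, and passing to the $\Z_p$-quotient multiplies or divides this by a gcd involving $p,q,k_1,k_2$; the upshot should be that $|k_i|$ is the \emph{actual} multiplicity (not merely the coprime part), so that $\alpha\alpha_i^0=k_i$ with $\alpha=\gcd(k_1,k_2)\cdot(\text{something})$ — but since $\gcd(k_1,k_2)=1$, one gets $\alpha=\gcd(k_1,k_2)\cdot\alpha'$ reconciled with (\ref{eqn:defn1}). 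More precisely, I would set $\alpha_1^0,\alpha_2^0$ to be the coprime parts of $k_1,k_2$ (so $\alpha_i^0=k_i$ up to sign) and verify that $\gcd(p,s\alpha_1^0-\alpha_2^0)$ with the appropriate $s$ yields $\alpha=p/\gcd(p,\ldots)$ matching $|k_i|/|\alpha_i^0|$. Once the pair $(\alpha_1^0,\alpha_2^0)$ and the value of $\alpha$ agree with those in the algorithm, part~(ii) of Theorem~\ref{thm:algorithm} (independence of the fibration from the auxiliary choices) immediately identifies the model fibration with the algorithmic one, and part~(iii) of that theorem — which says \emph{every} Seifert fibration of $L(p,q)$ over an orientable base arises from the algorithm — then gives the converse surjectivity needed for Theorem~\ref{thm:model}.

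The main obstacle I anticipate is bookkeeping the signs and the precise gcd relations when passing from the $S^3$-picture to the $\Z_p$-quotient: one must track how the meridian $\mu_i$ of the coordinate solid torus in $S^3$ descends, how the $\Z_p$-action (\ref{eqn:Z_p}) acts on the invariant torus $\pi_{k_1,k_2}^{-1}(C)$, and consequently how the surgery coefficients in (\ref{eqn:lens-surgery}) relate to $k_1,k_2$ modulo $p$. The cleanest route is probably to avoid computing the quotient directly and instead argue as follows: the model fibration is \emph{some} Seifert fibration of $L(p,q)$ over $S^2$ with singular fibres of multiplicities $|k_1|,|k_2|$; by Lemma~\ref{lem:two} it has at most two singular fibres, so it is of the form $M(0;(\alpha_1,\beta_1),(\alpha_2,\beta_2))$ with $\{|\alpha_1|,|\alpha_2|\}=\{|k_1|,|k_2|\}$; by Theorem~\ref{thm:algorithm}(iii) it is produced by the algorithm from the pair $(\alpha_1^0,\alpha_2^0)$ equal to the coprime parts of $(\alpha_1,\alpha_2)$. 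Since $\gcd(k_1,k_2)=1$, these coprime parts are just $(k_1,k_2)$ up to sign, and then I need only check that the $\alpha$ computed by (\ref{eqn:defn1}) equals $\gcd(|k_1|,|k_2|)\cdot 1=1$ — wait, that would force the model to have coprime multiplicities, which is false in general — so in fact $|\alpha_i|=|k_i|$ need not have $\gcd$ equal to $1$, and the coprime parts of $(k_1,k_2)$ upstairs may differ from those downstairs. Resolving this discrepancy is precisely the crux: I would verify by direct examination of the local models over $C$ that the downstairs multiplicities are $|k_i|$ and that the downstairs coprime parts $\alpha_i^0$ together with $\alpha=\gcd(|k_1|,|k_2|)$ do satisfy (\ref{eqn:defn1}) for the $s$ arising from the surgery presentation; granting this, Theorem~\ref{thm:algorithm}(ii)--(iii) finishes the proof of Theorem~\ref{thm:model}.
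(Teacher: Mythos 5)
Your overall strategy coincides with the paper's: cut $L(p,q)$ along the preimage of a circle separating the two cone points into two solid tori with $\Z_p$-invariant longitudes, express the regular fibre in terms of $\mu_i,\lambda_i$, match the result with the output of the algorithm of Theorem~\ref{thm:algorithm}, and then invoke parts (ii) and (iii) of that theorem. But the one computation on which everything hinges is missing, and the quantitative claims you make in its place are wrong. The downstairs multiplicities are \emph{not} $|k_i|$. Upstairs, the singular fibre $\{0\}\times S^1$ (the core of $\wtV_1$) has multiplicity $|k_2|$ and length $2\pi/|k_2|$; the $\Z_p$-action is free on it, so its length in the quotient is divided by~$p$. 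A \emph{regular} fibre, however, is preserved setwise by a subgroup of $\Z_p$ of some order $u\geq 1$, so its length is divided only by~$u$. Consequently the multiplicity of that singular fibre in $L(p,q)$ is $p|k_2|/u$: the coprime parts of the two downstairs multiplicities are $\{|k_1|,|k_2|\}$, and the common factor is $\alpha=p/u$. The entire proof therefore reduces to computing the stabiliser order $u$, i.e.\ the number of $\ell\in\{1,\dots,p\}$ with $\rme^{2\pi\rmi\ell/p}\cdot\tildeh=\tildeh$ as a set, and showing
\[
u=\gcd(p,sk_2-k_1),
\]
which is exactly what makes $\alpha=p/u$ agree with (\ref{eqn:defn1}) for $(\alpha_1^0,\alpha_2^0)=(k_2,k_1)$, and makes the relation $uh=-(sk_2-k_1)\mu_1+pk_2\lambda_1$ reproduce $h=\alpha_1\lambda_1-\alpha_1'\mu_1$ with the $\alpha_1'$ of (\ref{eqn:defn2}). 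The paper proves this by a lattice-point count in the universal cover of the splitting torus. Nothing in your proposal identifies or performs this stabiliser computation.

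Indeed, your own text is internally inconsistent on this point: you first assert that $|k_i|$ is ``the actual multiplicity (not merely the coprime part)'', then correctly observe that this would force the quotient fibration to have coprime multiplicities (false in general), and then propose to resolve the discrepancy by verifying ``that the downstairs multiplicities are $|k_i|$'' with ``$\alpha=\gcd(|k_1|,|k_2|)$'' --- which equals $1$ since $\gcd(k_1,k_2)=1$. The statement you propose to grant is false, so the appeal to Theorem~\ref{thm:algorithm}(ii)--(iii) does not go through as written. A secondary issue: your ``cleanest route'' only shows that each model fibration is isomorphic to some algorithmic one; Theorem~\ref{thm:model} needs the reverse inclusion, namely that every algorithmic fibration (hence, by Theorem~\ref{thm:algorithm}(iii), every Seifert fibration) is realised by a model. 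That too falls out of the explicit computation, since the model with parameters $(k_1,k_2)=(\alpha_2^0,\alpha_1^0)$ is identified with the algorithm's output for $(\alpha_1^0,\alpha_2^0)$, but it is not delivered by the soft argument alone.
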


We first consider the simple case of $L(1,0)=S^3$, cf.\
\cite[Satz~11]{seif33}.

\begin{prop}
A complete list of the Seifert fibrations of $S^3$ is provided by
\[ M\bigl(0;(\alpha_1,\beta_1),(\alpha_2,\beta_2)\bigr),\]
where $\alpha_1,\alpha_2$ is any pair of coprime natural numbers
with $\alpha_1\geq\alpha_2$,
and $\beta_1,\beta_2$ is a pair of integers with
$0\leq\beta_1<\alpha_1$ such that
$\alpha_1\beta_2+\beta_1\alpha_2=1$.
These Seifert fibrations are realised by the
model $S^1$-action
\[ \theta(z_1,z_2)=(\rme^{\rmi\alpha_2\theta}z_1,
\rme^{\rmi\alpha_1\theta}z_2). \]
\end{prop}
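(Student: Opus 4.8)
The plan is to specialise the earlier results to $p=1$, $q=0$, since $S^3=L(1,0)$. First I would reduce an arbitrary Seifert fibration of $S^3$ to the stated form: by Lemma~\ref{lem:two} its base is $S^2$ or $\RP^2$, and the latter is impossible by Proposition~\ref{prop:lens-rp2}, as $S^3$ is neither $L(4,1)$ nor $L(4,3)$. Again by Lemma~\ref{lem:two} there are at most two singular fibres, so — inserting a trivial pair $(1,0)$ by~(S1) if necessary — every Seifert fibration of $S^3$ is isomorphic to some $M\bigl(0;(\alpha_1,\beta_1),(\alpha_2,\beta_2)\bigr)$.

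Next I would use Theorem~\ref{thm:jane4.4}. Since $M\bigl(0;(\alpha_1,\beta_1),(\alpha_2,\beta_2)\bigr)$ is always a lens space, glued from two solid tori, it is diffeomorphic to $S^3$ precisely when $\alpha_1\beta_2+\beta_1\alpha_2=1$ (the condition on $q$ being vacuous modulo~$1$). This equation forces $\gcd(\alpha_1,\alpha_2)=1$, and it is preserved by the move~(S2), since $\alpha_1(\beta_2-k\alpha_2)+(\beta_1+k\alpha_1)\alpha_2=\alpha_1\beta_2+\beta_1\alpha_2$. Using (S0)--(S3) I can therefore normalise to $\alpha_1\ge\alpha_2\ge1$ with $0\le\beta_1<\alpha_1$; for coprime $\alpha_1\ge\alpha_2\ge1$ the residue $\beta_1\equiv\alpha_2^{-1}$ mod~$\alpha_1$ is then the unique admissible choice, and $\beta_2$ is determined. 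That every Seifert fibration of $S^3$ indeed occurs can alternatively be read off directly from Theorem~\ref{thm:algorithm}(iii): feeding $p=1$, $q=0$ and a coprime pair $\alpha_1^0,\alpha_2^0$ into the algorithm (with, say, $r=1$ and $s=0$) gives $\alpha=1$, $\alpha_1'=-\alpha_2^0$, and the fibration $M\bigl(0;(\alpha_1^0,\beta_1),(\alpha_2^0,\beta_2)\bigr)$ with $\alpha_1^0\beta_2+\beta_1\alpha_2^0=1$, which is exactly the asserted list.

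Finally I would verify the realisation by the linear $S^1$-action $\theta(z_1,z_2)=(\rme^{\rmi\alpha_2\theta}z_1,\rme^{\rmi\alpha_1\theta}z_2)$. This action has singular fibres $S^1\times\{0\}$ and $\{0\}\times S^1$ of multiplicities $|\alpha_2|$ and $|\alpha_1|$, so it is the model fibration attached to the coprime pair $(\alpha_2,\alpha_1)$ of Section~\ref{subsection:model}, and by Theorem~\ref{thm:model} it is isomorphic to one of the fibrations just listed. To pin down \emph{which} one — that is, to compute its Seifert invariants directly — I would split $S^3$ into the solid tori $\{|z_1|\le|z_2|\}$ and $\{|z_1|\ge|z_2|\}$, read off the meridians and longitudes $\mu_i,\lambda_i$ on the Heegaard torus $\{|z_1|=|z_2|\}$, express the regular fibre $h$ of the action and the section curves $q_1=-q_2$ in that basis, and compare with~(\ref{eqn:seifert}); this is precisely the geometric computation that the rest of Section~\ref{section:model} is designed to carry out.

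I expect the last step to be the main obstacle: keeping track of the orientations of $h$, of $q_1$ and $q_2$, and of the meridian--longitude bases on the Heegaard torus carefully enough to fix the \emph{signs} of the $\beta_i$, and hence to confirm that the defining relation comes out as $\alpha_1\beta_2+\beta_1\alpha_2=1$ once $\alpha_1,\alpha_2$ have been taken positive. Everything else is a routine specialisation of Theorems~\ref{thm:jane4.4} and~\ref{thm:algorithm}.
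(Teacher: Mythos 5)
Your classification of the admissible Seifert invariants is exactly the paper's argument: Theorem~\ref{thm:jane4.4} gives the condition $\alpha_1\beta_2+\beta_1\alpha_2=1$, and (S0)--(S3) normalise to $\alpha_1\ge\alpha_2\ge1$, $0\le\beta_1<\alpha_1$, with $(\beta_1,\beta_2)$ then unique. The only place you diverge is the realisation step, and there the ``main obstacle'' you flag is in fact a non-issue: your own normalisation argument shows that for coprime $\alpha_1\ge\alpha_2\ge1$ the pair $(\beta_1,\beta_2)$ is determined up to the move~(S2), i.e.\ \emph{a Seifert fibration of $S^3$ is determined up to isomorphism by the unordered pair of multiplicities of its singular fibres}. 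So to pin down which fibration on your list the action $\theta(z_1,z_2)=(\rme^{\rmi\alpha_2\theta}z_1,\rme^{\rmi\alpha_1\theta}z_2)$ realises, it suffices to note that its singular fibres $S^1\times\{0\}$ and $\{0\}\times S^1$ have multiplicities $\alpha_2$ and $\alpha_1$; no meridian--longitude computation and no sign-chasing for the $\beta_i$ is needed. This one-line observation is precisely how the paper concludes. Two smaller points: invoking Theorem~\ref{thm:model} here is mildly circular, since the present proposition is part of the evidence for that theorem (and you do not need it anyway --- completeness of your list already says the model action is isomorphic to something on it); and the reduction to base $S^2$ with at most two singular fibres via Lemma~\ref{lem:two} and Proposition~\ref{prop:lens-rp2}, which you spell out, is left implicit in the paper but is correct as you state it.
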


\begin{proof}
By Theorem~\ref{thm:jane4.4}, the condition for
$M\bigl(0;(\alpha_1,\beta_1),(\alpha_2,\beta_2)\bigr)$ to equal $S^3$ is
$\alpha_1\beta_2+\beta_1\alpha_2=1$, i.e.\ that $\alpha_1,\alpha_2$
be coprime. Without loss of generality we may assume
$\alpha_1\geq\alpha_2$. For given $\alpha_1,\alpha_2$,
the pair $(\beta_1,\beta_2)$ is unique up to adding $(k\alpha_1,-k\alpha_2)$,
$k\in\Z$. By (S2) this does not affect the Seifert fibration
up to isomorphism,
and by applying this equivalence we can choose a unique representative
of the Seifert invariants where $0\leq\beta_1<\alpha_1$.

In particular, this argument shows that any given Seifert fibration
of $S^3$ is determined by the multiplicities $\alpha_1,\alpha_2$ of
the singular fibres, and hence it is isomorphic
to one of the model fibrations.
\end{proof}

From now on we consider a lens space $L(p,q)$ with $p>q>0$ and
$\gcd(p,q)=1$, thought of as a quotient of $S^3$ under the
$\Z_p$-action~(\ref{eqn:Z_p}). We begin by considering the
$S^1$-action
\[ \theta(z_1,z_2)=(\rme^{\rmi k_1\theta}z_1,\rme^{\rmi k_2\theta}z_2)\]
on~$S^3$. Define a decomposition of $S^3=\wtV_1\cup\wtV_2$ into two solid
tori by setting
\[ \wtV_i:=\bigl\{(z_1,z_2)\in S^3\co |z_i|^2\leq 1/2\bigr\},\;\;
i=1,2.\]
On the $2$-tori $\partial\wtV_i=:\wtT_i$ we define meridians
$\tmu_i$ and longitudes $\tlambda_i$ by
\begin{align*}
\tmu_1(t)     & =\frac{1}{\sqrt{2}}(\rme^{\rmi t},1),&
\tlambda_1(t) & =\frac{1}{\sqrt{2}}(\rme^{\rmi st},\rme^{\rmi t}),\\
\tmu_2(t)     & =\frac{1}{\sqrt{2}}(1,\rme^{\rmi t}),&
\tlambda_2(t) & =\frac{1}{\sqrt{2}}(\rme^{\rmi t},\rme^{\rmi qt}),
\end{align*}
where $t$ always runs from $0$ to~$2\pi$, and $r,s$ are as
in Section~\ref{subsection:lens-surgery}. Our choice of longitudes is
explained by the fact that these curves are invariant under
the $\Z_p$-action and hence will descend to longitudes on the
two solid tori making up $L(p,q)$.

A regular fibre of the Seifert fibration of $S^3$ lying inside
the $2$-torus $\wtT_1=\wtT_2$ can be parametrised as
\[ \tildeh(t)=\frac{1}{\sqrt{2}}(\rme^{\rmi k_1t},\rme^{\rmi k_2t}),\;\;
t\in[0,2\pi].\]
In the homology of $\wtT_1=\wtT_2$ we have
\[ \tildeh=-(sk_2-k_1)\tmu_1+k_2\tlambda_1\]
and
\[ \tildeh=-(qk_1-k_2)\tmu_2+k_1\tlambda_2,\]
respectively. Comparing this with the gluing map~(\ref{eqn:seifert})
we see that this amounts to
\[ (\talpha_1,\talpha_1')=(k_2,sk_2-k_1)\]
and
\[ (\talpha_2,\talpha_2')=(k_1,qk_1-k_2),\]
from which one can easily determine the Seifert invariants
of the Seifert fibration of~$S^3$.

We now consider the projection
\[ \mfp\co S^3\longrightarrow S^3/\Z_p=L(p,q)\]
to the quotient. The images $V_i:=\mfp(\wtV_i)$ are again solid tori, with
meridian and longitude given by
\[ \mu_i=\mfp\circ\tmu_i\;\;\;\text{and}\;\;\;
\lambda_i=\mfp\circ\tlambda_i|_{[0,2\pi/p]}.\]
Notice that $p\lambda_2-q\mu_2=\mu_1$ in the homology of $T_i$,
which accords with~(\ref{eqn:lens-surgery}).

What are the Seifert invariants of the induced Seifert fibration of
$L(p,q)?$ Obviously, the length of the singular fibres $S^1\times\{0\}$
and $\{0\}\times S^1$ on~$S^3$, which was $|2\pi/k_1|$ and $|2\pi/k_2|$,
respectively, is reduced to $|2\pi/pk_1|$ and $|2\pi/pk_2|$, since these fibres
are invariant under the $\Z_p$-action (and the action on the fibre
is free). If the regular fibres were freely permuted by the $\Z_p$-action,
their length would remain equal to~$2\pi$, and the new multiplicities
of the singular fibres would be $|pk_1|$, $|pk_2|$. In general, however,
a subgroup of $\Z_p$ will leave a regular fibre invariant
(and act freely on it).

We now determine this subgroup and thence deduce the Seifert invariants.
A particular case of this analysis was carried out in
\cite[Proposition~7.6]{gego15}.

The following notation is chosen with prescience. Write $u$ for the
number of elements in $\Z_p$ whose action leaves the regular fibre
$\tildeh$ invariant, that is, the number of $\ell\in\{1,\ldots,p\}$
with $\rme^{2\pi\rmi\ell/p}\tildeh=\tildeh$ (as a set). Then the
regular fibre on the quotient $S^3/\Z_p$ is parametrised by
\[ h=\mfp\circ\tildeh|_{[0,2\pi/u]}.\]
Notice that in the homology of $T_1$ we have
\[ uh=-(sk_2-k_1)\mu_1+k_2p\lambda_1.\]

The following lemma shows that if we take
$k_1:=\alpha_2^0$ and $k_2:=\alpha_1^0$ in the construction above,
we have precisely the set-up of the algorithm in Theorem~\ref{thm:algorithm}.
Indeed, the formula above then becomes $h=\alpha_1\lambda_1-
\alpha_1'\mu_1$ as in~(\ref{eqn:seifert}).
This allows one to read off the Seifert invariants and
concludes the proof of Theorem~\ref{thm:model}.

\begin{rem}
Notice that the circle $S^1\times\{0\}\subset S^3$, which is the
singular fibre of multiplicity $k_1$ in the Seifert fibration
of~$S^3$, is the spine of the solid torus~$\wtV_2$; the fibre
$\{0\}\times S^1$ of multiplicity $k_2$ is the spine of~$\wtV_1$.
This explains why the indices of the $k_i$ and
the $\alpha_i^0$ are interchanged.
\end{rem}

\begin{lem}
$u=\gcd(p,sk_2-k_1)$.
\end{lem}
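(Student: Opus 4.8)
The plan is to read off $u$ directly from the homology relation
\[ uh=-(sk_2-k_1)\mu_1+k_2p\lambda_1 \]
recorded just above, using that $h$ represents a \emph{primitive} class in $H_1(T_1;\Z)\cong\Z^2$, the latter group having basis $\mu_1,\lambda_1$. Since $u\ge 1$ (the trivial deck transformation, $\ell=p$, fixes $\tildeh$), primitivity of $h$ turns this relation into the statement that $u$ equals the greatest common divisor of the two coefficients, i.e.\ $u=\gcd(sk_2-k_1,\,k_2p)$.

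To justify that $h$ is primitive, I would check that $h$ is an embedded circle on the torus $T_1$. The $p$ curves $\rme^{2\pi\rmi\ell/p}\tildeh$, $\ell=1,\dots,p$, are linear curves of one and the same slope $(k_1:k_2)$ on the Clifford torus $\wtT_1$, so any two of them are either equal or disjoint; exactly $u$ of them coincide with $\tildeh$, by definition of $u$. Hence $\mfp$ identifies no two distinct points of the subarc $\tildeh|_{[0,2\pi/u)}$, so $h=\mfp\circ\tildeh|_{[0,2\pi/u]}$ is a simple closed curve on $T_1$ — and $\tildeh\to h$ is a $u$-fold covering, which is why the factor $u$ appears in the displayed relation. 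A nontrivial embedded circle on a torus carries a primitive homology class, as needed.

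It then remains only to simplify $\gcd(sk_2-k_1,k_2p)$ to $\gcd(p,sk_2-k_1)$. Since $k_1$ and $k_2$ are coprime, so are $sk_2-k_1$ and $k_2$: any common prime divisor would also divide $sk_2-(sk_2-k_1)=k_1$. Therefore $\gcd(sk_2-k_1,k_2p)=\gcd(sk_2-k_1,p)$, which is the assertion. I expect the only point requiring real care to be the embeddedness (equivalently primitivity) of $h$; the remaining steps are routine. As an independent check one may compute $u$ upstairs: $\ell$ fixes $\tildeh$ iff the translation of $\wtT_1$ by $(2\pi\ell/p,2\pi q\ell/p)$ preserves the slope-$(k_1:k_2)$ line, i.e.\ iff $p\mid(k_2-qk_1)\ell$; the number of such $\ell$ in $\{1,\dots,p\}$ is $\gcd(p,k_2-qk_1)$, and multiplying by the unit $q$ (recall $qs\equiv 1$ mod $p$) gives $\gcd(p,sk_2-k_1)$, since $q(sk_2-k_1)\equiv k_2-qk_1$ mod $p$.
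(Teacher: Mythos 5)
Your argument is correct, but it reaches the conclusion by a genuinely different route from the paper's. The paper passes to the universal cover $\R^2$ of $T_1$ and determines $u$ by an explicit count: it compares the lattice of points coming from the $\Z_p$-orbit with the family $H$ of lines lifting the fibre, computes $\#(S\cap H)=qk_1-k_2$ by intersecting $H$ with the sides of a triangle, and reads off $u=\gcd(p,qk_1-k_2)=\gcd(p,sk_2-k_1)$; as the paper notes, this requires a short case distinction according to the sign and size of $k_2/k_1$ relative to~$q$. You instead exploit the relation $uh=-(sk_2-k_1)\mu_1+k_2p\lambda_1$ in $H_1(T_1)$ recorded just before the lemma: since $h$ is an embedded, homologically nontrivial circle on the torus $T_1$, its class is primitive, so $u$ is the greatest common divisor of the two coefficients, and the reduction $\gcd(sk_2-k_1,k_2p)=\gcd(sk_2-k_1,p)$ is immediate from $\gcd(k_1,k_2)=1$. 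This is shorter and avoids both the figure and the case analysis, at the price of having to justify embeddedness of $h$ --- which you do adequately: the $\Z_p$-translates of $\tildeh$ are parallel $(k_1,k_2)$-curves on $\wtT_1$, so $\mfp$ identifies points of $\tildeh$ only via its setwise stabiliser, a group of order $u$ acting freely by rotations; this simultaneously justifies that $\mfp\circ\tildeh$ is the $u$-fold iterate of $h$, so there is no circularity in invoking the displayed relation. Your closing ``independent check'' --- counting the $\ell$ with $p\mid(k_2-qk_1)\ell$ and converting via $qs\equiv 1$ mod $p$ --- is in fact a complete self-contained proof on its own, and is essentially an algebraic rendering of the paper's geometric count.
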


\begin{proof}
In order to determine $u$, we consider a point on the fibre $h$
and its translates under the $\Z_p$-action. Then $u$ is the
number of these translates that lie again on~$h$.

We pass to the universal cover $\R^2$ of $T_1$ and lift the
$S^1$-action and the $\Z_p$-action on $T_1$ to an $\R$-action
and a $\Z$-action, respectively. With respect to the euclidean
metric on~$\R^2$, the $\R$-action is given by
\[ (x,y)\longmapsto (x+k_1t,y+k_2t);\]
the $\Z$-action is given by
\[ (x,y)\longmapsto \Bigl(x+\frac{1}{p},y+\frac{q}{p}\Bigr).\]
Figure~\ref{figure:slopes} shows the situation for
$p=6$, $q=5$, $k_1=3$ and $k_2=1$. For simplicity, we assume
$q>k_2/k_1>0$ for the remainder of the argument;
the other three cases $q=k_2/k_1$, $q<k_2/k_1$ and
$k_2/k_1<0$ are analogous.

\begin{figure}[h]
\labellist
\small\hair 2pt
\pinlabel $(1,q)$ [tl] at 277 554
\pinlabel $y$ [r] at 158 572
\pinlabel $x$ [t] at 519 14
\pinlabel $S$ [r] at 241 416
\pinlabel $\hat{h}$ [b] at 339 80
\endlabellist
\centering
\includegraphics[scale=0.4]{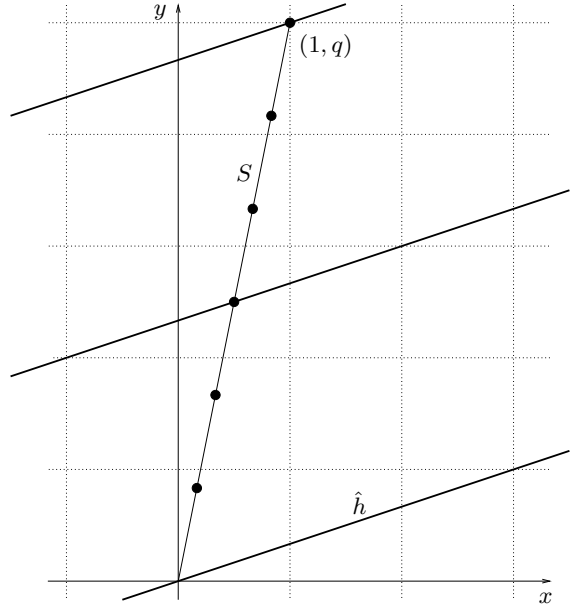}
  \caption{Determining $u$ geometrically.}
  \label{figure:slopes}
\end{figure}

Set
\[ P:=\Bigl\{\Bigl(\frac{\ell}{p},\frac{\ell q}{p}\Bigr)\co
\ell=1,\ldots,p\Bigr\}\]
and consider the lift
\[ \hat{h}:=\bigl\{(k_1t,k_2t)\co t\in\R\bigr\}\]
of $h$ through the point $(0,0)$. The set of all lifts of $h$
is given by
\[ H:=\hat{h}+\Z^2.\]
Then $u$ equals the number of intersection points of $P$ with~$H$,
that is,
\[ u=\#(P\cap H).\]
Notice that the vertical distance between adjacent lines in the family $H$
is $1/k_1$, the horizontal distance is $1/k_2$.

In order to determine this number~$u$, we first consider the
line segment
\[ S:=\bigl\{(t,qt)\co t\in(0,1]\bigr\}, \]
which is subdivided by the points in $P$ into $p$ parts of
equal length. Write $X$ for the closed line segment
joining the points $(0,0)$ and $(1,0)$, and $Y$ for the line
segment joining $(1,0)$ with $(1,q)$.
Then the number of intersection
points of $H$ with the half-open line segment $S$ equals the
number of times $H$ intersects $Y$ minus the number of times
it intersects~$X$. Thus,
\begin{eqnarray*}
\#(S\cap H)
& = & \#(Y\cap H)-\#(X\cap H)\\
& = & qk_1-k_2.
\end{eqnarray*}
So the line segment $S$ is cut by $H$ into $qk_1-k_2$
parts of equal length, and $u$ is the number of these
division points that lie in~$P$.

The first division points along $S$ that coincide are characterised
by the existence of coprime natural numbers $n_1,n_2$ such that
\[ \frac{n_1}{p}=\frac{n_2}{qk_1-k_2},\]
which is equivalent to $n_1(qk_1-k_2)=n_2p=\lcm(p,qk_1-p)$.
It follows that
\[ u=\frac{p}{n_1}=\gcd(p,qk_1-k_2)=\gcd(p,sk_2-k_1),\]
as claimed.
\end{proof}

\begin{rem}
In the standard model from Section~\ref{subsection:model},
the diffeomorphism giving the isomorphism in case \textbf{(A)} is
given by $(z_1,z_2)\mapsto(\oz_1,z_2)$, in case \textbf{(B)} by
$(z_1,z_2)\mapsto(z_2,z_1)$, and in case \textbf{(C)} by $(z_1,z_2)\mapsto
(\oz_2,z_1)$.
\end{rem}
\begin{ack}
We thank the referee for useful comments that helped to improve
the exposition.
\end{ack}

\end{document}